\theoremstyle{theorem}
\newtheorem{theorem}{Theorem}[section]
\newtheorem{corollary}[theorem]{Corollary}
\newtheorem{lemma}[theorem]{Lemma}
\newtheorem{proposition}[theorem]{Proposition}
\newtheorem{theoremx}{Theorem}
\theoremstyle{definition}
\newtheorem{definition}[theorem]{Definition}
\newtheorem*{notation*}{Notation}
\newtheorem{setting}[theorem]{Setting}
\newtheorem{example}[theorem]{Example}
\newtheorem{remark}[theorem]{Remark}
\numberwithin{equation}{subsection}
\newcommand{\Proj}{\operatorname{Proj}}
\newcommand{\Spec}{\operatorname{Spec}}
\newcommand{\Ext}{\operatorname{Ext}}
\newcommand{\InjDim}{\operatorname{inj.dim}}
\newcommand{\Supp}{\operatorname{Supp}}
\newcommand{\Ht}{\operatorname{height}}	
\newcommand{\depth}{\operatorname{depth}}	
\newcommand{\Hom}{\operatorname{Hom}}	
\newcommand{\cc}{\operatorname{c}}
\newcommand{\cd}{\operatorname{cd}}
\newcommand{\ara}{\operatorname{ara}}
\newcommand{\codim}{\operatorname{codim}}
\renewcommand{\a}{\mathfrak{a}}
\renewcommand{\b}{\mathfrak{b}}
\renewcommand{\c}{\mathfrak{c}}
\newcommand{\m}{\mathfrak{m}}
\newcommand{\n}{\mathfrak{n}}
\newcommand{\p}{\mathfrak{p}}
\newcommand{\q}{\mathfrak{q}}
\renewcommand{\r}{\mathfrak{r}}
\newcommand{\PP}{\mathbb{P}}
\renewcommand{\k}{\Bbbk}
\newcommand{\V}{\mathbb{V}}
\newcommand{\onto}{\twoheadrightarrow}
\renewcommand{\(}{\left(}
\renewcommand{\)}{\right)}
\newcommand{\graph}{\Gamma}
\newcommand{\conncomp}{s}
\newcommand{\counter}{m}
\newcommand{\eltL}{(}
\newcommand{\eltR}{)}
\newcommand{\transition}[1]{#1}
\begin{document}

\title[Connectedness and Lyubeznik numbers]{Connectedness and Lyubeznik numbers}
\author[L.\ N\'u\~nez-Betancourt]{Luis N\'u\~nez-Betancourt}
\author[S.\ Spiroff]{Sandra Spiroff}
\author[E.\ E.\ Witt]{Emily E.\ Witt}


\begin{abstract}
We investigate the relationship between connectedness properties of spectra and the Lyubeznik numbers, numerical invariants defined via local cohomology.  
We prove that for complete equidimensional local rings, the Lyubeznik numbers characterize when connectedness dimension equals one. 
More generally, these invariants determine a bound on connectedness dimension.  
Additionally, our methods imply that the Lyubeznik number $\lambda_{1,2}(A)$ of the local ring $A$ at the vertex of the affine cone over a projective variety 
is independent of the choice of its embedding into projective space. 
\end{abstract}

\maketitle

\begin{center}
\dedicatory{\emph{Dedicated to Professor Gennady Lyubeznik on the occasion of his sixtieth birthday}}
\end{center}



\section{Introduction}

\subsection*{Connectedness dimension and local cohomology}
Given a complete local ring $A$ containing a field, with a separably closed residue field, write $A \cong R/I$, where $I$ is an ideal of $(R, \m, \k)$, an $n$-dimensional complete regular local ring.
Suppose further that $\dim(A) \geq 2$.
The second vanishing theorem of local cohomology (SVT)  states that the local cohomology module $H^{n-1}_I(R)$ vanishes if and only if the punctured spectrum $\Spec^\circ(A)$ of $A$ is connected \cite{Ogus73,PeskineSzpiro73}.  

Herein, our overarching goal is to better understand the relationship between local cohomology and connectedness properties of spectra.  
The SVT says that $H^{n-1}_I(R)$ vanishes if and only if $\cc(A) = 0$, where $\cc(A)$ is the  \emph{connectedness dimension} of $A$, the minimal dimension of a closed subset $Z$ of the spectrum of $A$ for which $\Spec(A) \setminus Z$ is a disconnected topological space.    
Inspired by this, it is natural to ask whether the connectedness dimension can be characterized by the vanishing of local cohomology more generally. 

We find that studying the \emph{Lyubeznik numbers} is advantageous in this regard.
With the notation above, given nonnegative integers $i$ and $j$, the Lyubeznik number $\lambda_{i,j}(A)$ of $A$ is the $i^\text{th}$ Bass number of the local cohomology module $H^{n-j}_I(R)$ with respect to $\m$, $\dim_{\k}\Ext^{i}_R(\k, H^{n-j}_I(R))$, which is finite \cite{Huneke, Lyubeznik93}.
The SVT demonstrates that the vanishing of $H^{n-1}_I(R)$ is independent of the representation of $A$ as $R/I$; the Lyubeznik numbers also do not depend on the representation \cite{Lyubeznik93}. 

If $A$ is equidimensional, the connectedness dimension of  $A$ is at least $\dim(A) - 1$ precisely when the highest Lyubeznik number $\lambda_{d, d}(A)$, where $d= \dim(A)$, is one  \cite{HochsterHuneke94, Walther01, LyuInvariants, Zhang07}. 
Moreover, $H^{n-1}_I(R)$ vanishes if and only if $\lambda_{0,1}(A)$ does, so that $\cc(A) \geq 1$ if and only if $\lambda_{0,1}(A) = 0$. 
We prove that the condition that $\cc(A) = 1$ is characterized by the Lyubeznik numbers $\lambda_{0,1}(A)$ and $\lambda_{1,2}(A)$, a more subtle 
requirement than a bound on cohomological dimension.  

\begin{theoremx}[Theorem \ref{lambda12Final: T}] \label{thm12intro: T}
Let $A$ be an equidimensional complete local ring containing a field, of dimension at least three, with a separably closed residue field.  
Then
\[
\cc(A) \geq 2 \text{ if and only if } \lambda_{0,1}(A) = \lambda_{1,2}(A) = 0.
\]
In other words, write $A \cong R/I$, where $I$ is an ideal of an $n$-dimensional complete regular local ring $(R, \m)$.
Then $\cc(A) \geq 2$ if and only if $H^{n-1}_I(R) = H^1_\m(H^{n-2}_I(R)) = 0$.  
\end{theoremx}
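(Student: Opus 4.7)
The plan is to extend the Second Vanishing Theorem (SVT) by one step in the connectedness hierarchy. Since SVT already identifies $\lambda_{0,1}(A) = 0 \iff H^{n-1}_I(R) = 0 \iff \cc(A) \geq 1$, the genuine content of the theorem is: assuming $H^{n-1}_I(R) = 0$, the inequality $\cc(A) \geq 2$ is equivalent to $H^1_\m(H^{n-2}_I(R)) = 0$. A preliminary step is to identify the two formulations $\lambda_{1,2}(A) = 0$ and $H^1_\m(H^{n-2}_I(R)) = 0$: I would apply the composition-of-functors spectral sequence $\Ext^p_R(\k, H^q_\m(M)) \Rightarrow \Ext^{p+q}_R(\k, M)$ with $M = H^{n-2}_I(R)$, combined with the auxiliary vanishing $\Gamma_\m(H^{n-2}_I(R)) = 0$ extracted from the Grothendieck spectral sequence $H^p_\m(H^q_I(R)) \Rightarrow H^{p+q}_\m(R)$ using $H^{n-1}_I(R) = 0$ (the abutment vanishes in all degrees $p+q \neq n$ by regularity of $R$). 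The main tools throughout are this Grothendieck spectral sequence and the Mayer--Vietoris long exact sequence in local cohomology.

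For the $(\Leftarrow)$ direction I would argue by contradiction. Assume $H^{n-1}_I(R) = H^1_\m(H^{n-2}_I(R)) = 0$ yet $\cc(A) \leq 1$; SVT then forces $\cc(A) = 1$, so there is a closed $V(\bar{J}) \subseteq \Spec(A)$ of dimension at most one whose complement decomposes as $U_1 \sqcup U_2$. Taking closures $V(\bar{I}_j) = \overline{U_j}$ and lifting to $R$ produces ideals $I_1, I_2 \supseteq I$ with $V(I_1) \cup V(I_2) = V(I)$, $V(I_j) \subsetneq V(I)$, and $J := I_1 + I_2$ satisfying $\dim R/J \leq 1$. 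The Mayer--Vietoris sequence, truncated by $H^{n-1}_I(R) = 0$, yields in particular a surjection
\[
H^{n-2}_{I_1}(R) \oplus H^{n-2}_{I_2}(R) \twoheadrightarrow H^{n-2}_J(R).
\]
The plan is to exploit $\dim R/J \leq 1$ and the Hartshorne--Lichtenbaum vanishing $H^n_J(R) = 0$ (valid since $R$ is a complete regular local domain and $\dim R/J \geq 1$) to detect a nontrivial class in $H^1_\m(H^{n-2}_J(R))$; this class then propagates back through the Mayer--Vietoris data after applying $H^*_\m$ to yield a nontrivial element of $H^1_\m(H^{n-2}_I(R))$, contradicting the hypothesis.

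The $(\Rightarrow)$ direction runs in reverse: starting from $\cc(A) \geq 2$ together with a hypothetical nonzero class in $H^1_\m(H^{n-2}_I(R))$, one aims to construct ideals $I_1, I_2 \supseteq I$ with $V(I_1) \cup V(I_2) = V(I)$, $V(I_j) \subsetneq V(I)$, and $\dim R/(I_1 + I_2) \leq 1$, producing a closed subset of dimension at most one whose removal disconnects $\Spec(A)$ and contradicting $\cc(A) \geq 2$. The main obstacle throughout is the transfer of nonvanishings across the Mayer--Vietoris sequence after applying $H^*_\m$: since $H^*_\m$ converts an exact sequence into a spectral sequence rather than another long exact sequence, one must simultaneously analyze the Grothendieck spectral sequences for each of $I, I_1, I_2, J$ and carefully track how the Mayer--Vietoris boundary maps interact with the differentials on their $E_2$-pages. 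A secondary difficulty, specific to the $(\Rightarrow)$ direction, is realizing the given Bass-number class geometrically as a gluing obstruction between pieces of $V(I)$, mirroring and extending the analogous construction in the classical proof of the SVT.
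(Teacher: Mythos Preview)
Your $(\Leftarrow)$ sketch has the right architecture but two concrete errors. First, the Mayer--Vietoris maps run $H^k_J \to H^k_{I_1}\oplus H^k_{I_2} \to H^k_I \to H^{k+1}_J$, so there is no map from the direct sum onto $H^{n-2}_J(R)$. Second, since $\dim R/J = 1$ (it cannot be $0$, else $\Spec^\circ(A)$ would already be disconnected), the nonvanishing you need is $H^1_\m\bigl(H^{n-1}_J(R)\bigr)\cong E$, coming from $\lambda_{1,1}(R/J)=1$, not anything about $H^{n-2}_J(R)$. Once corrected, you also need $H^{n-1}_{I_j}(R)=0$ to make the connecting map $H^{n-2}_I(R)\to H^{n-1}_J(R)$ surjective; this does follow from SVT since each $U_j$ is connected and dense in $\Spec^\circ(R/I_j)$, but you should say so. With these fixes the argument becomes essentially the inductive Mayer--Vietoris step the paper carries out.

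Your $(\Rightarrow)$ direction, however, is not a plan but a hope: you give no mechanism for manufacturing a decomposition $I_1,I_2$ out of a nonzero class in $H^1_\m(H^{n-2}_I(R))$, and no standard proof of SVT supplies such a mechanism to mimic. The paper's argument for this direction is entirely different and sidesteps the obstacle. It shows that for a sufficiently general element $x$ in the maximal ideal of $A$ one has both (i)~the identity $\lambda_{0,1}(A)+\lambda_{1,2}(A)=\lambda_{0,1}(A/xA)$, extracted from the long exact sequence $\cdots\to H^k_I(R)\to H^k_I(R)_r\to H^{k+1}_{I+(r)}(R)\to\cdots$ with $r$ a lift of $x$, and (ii)~that connectedness of the graph $\graph_{d-2}(A)$ (vertices the minimal primes, edges when $\Ht(\p+\q)\le d-2$) passes to $\graph_{d-2}(A/xA)$. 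Since $\cc(A)\ge 2$ is equivalent to $\graph_{d-2}(A)$ being connected, one gets $\graph_{d-2}(A/xA)$ connected, hence $\lambda_{0,1}(A/xA)=0$ by SVT applied to the $(d-1)$-dimensional ring $A/xA$, and therefore $\lambda_{1,2}(A)=0$. As a side remark, the identification $\lambda_{1,2}(A)=0\Leftrightarrow H^1_\m(H^{n-2}_I(R))=0$ needs no spectral-sequence gymnastics: it is immediate from Lyubeznik's result that $H^i_\m(H^{n-j}_I(R))\cong E^{\oplus\lambda_{i,j}(A)}$.
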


More generally, the connectedness dimension is bounded in terms of the vanishing of Lyubeznik numbers on the superdiagonal of the Lyubeznik table $\left[ \lambda_{i,j}(A) \right]_{0 \leq i, j \leq \dim(A)}$.

\begin{theoremx}[Theorem \ref{vanishingLyuNumConnDim: T}] \label{generalIntro: T}
Let $A$ be an equidimensional complete local ring containing a field, of dimension at least two, with a separably closed residue field.  
If $\lambda_{0,1}(A) = \lambda_{1,2}(A) = \cdots = \lambda_{i-1,i}(A) = 0$ for some $i < \dim(A)$, then $\cc(A) \geq i$.  
\end{theoremx}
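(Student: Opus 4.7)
The plan is to proceed by induction on $i$. The base case $i = 1$ is the Second Vanishing Theorem: the hypothesis $\lambda_{0,1}(A) = \dim_\k \Hom_R(\k, H^{n-1}_I(R)) = 0$ forces $H^{n-1}_I(R) = 0$ (using the structural fact, noted in the introduction, that non-vanishing of $H^{n-1}_I(R)$ is detected by its socle), whence the SVT gives $\cc(A) \geq 1$. The case $i = 2$ is Theorem \ref{thm12intro: T}. For the inductive step with $i \geq 3$, assume the theorem for $i-1$; then the hypotheses through $\lambda_{i-2,i-1}(A) = 0$ give $\cc(A) \geq i - 1$, and we want to upgrade this to $\cc(A) \geq i$ using the additional vanishing $\lambda_{i-1,i}(A) = 0$.

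The main argument is by contradiction. If $\cc(A) = i - 1$, then there exist ideals $\a, \b \subseteq R$ containing $I$ with $\sqrt{\a \cap \b} = \sqrt{I}$ and $\dim R/(\a+\b) = i - 1$, so $\Ht(\a+\b) \geq n - i + 1$ and hence $H^k_{\a+\b}(R) = 0$ for $k \leq n - i$ (as $R$ is regular, hence Cohen-Macaulay). The Mayer-Vietoris sequence
$$\cdots \to H^{n-i}_\a(R) \oplus H^{n-i}_\b(R) \to H^{n-i}_I(R) \xrightarrow{\delta} H^{n-i+1}_{\a+\b}(R) \to \cdots$$
then furnishes a connecting map $\delta$ that records the disconnection. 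I would combine this with the Grothendieck composition spectral sequence $E_2^{p,q} = \Ext^p_R(\k, H^q_\m(M)) \Rightarrow \Ext^{p+q}_R(\k, M)$ applied to $M = H^{n-i+1}_{\a+\b}(R)$, noting that $\Supp M \subseteq V(\a+\b)$ forces $H^q_\m(M) = 0$ for $q > i - 1$; together with the inductive vanishings $\lambda_{0,1}(A), \ldots, \lambda_{i-2,i-1}(A) = 0$ (which clear Ext contributions from the earlier terms of the Mayer-Vietoris sequence), this should propagate a nonzero class back through $\delta$ to $\Ext^{i-1}_R(\k, H^{n-i}_I(R))$, contradicting $\lambda_{i-1,i}(A) = 0$.

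The main obstacle lies in the spectral sequence bookkeeping: precisely tracking how the disconnection class in $H^{n-i+1}_{\a+\b}(R)$ travels, through $\delta$ and through the Grothendieck composition spectral sequence, to detect a nonzero element in the Ext group computing $\lambda_{i-1,i}(A)$, and verifying that the inductive vanishing hypotheses are exactly what is needed to rule out cancellation along the way. A cleaner alternative route may be to first establish an auxiliary \emph{superdiagonal Mayer-Vietoris lemma} that directly compares $\lambda_{i-1,i}(A)$ with corresponding Bass-number invariants of $R/\a$, $R/\b$, and $R/(\a+\b)$, in the spirit of the argument used for Theorem \ref{thm12intro: T}, and then iterate it along the inductive scheme.
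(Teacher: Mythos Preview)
Your inductive skeleton is exactly the paper's: the theorem is deduced from the one-step lemma ``$\cc(A)\geq j$ and $\lambda_{j,j+1}(A)=0$ imply $\cc(A)\geq j+1$'' (Lemma~\ref{connDimUp: L}), with the base case given by the SVT. The divergence is entirely in how that one-step lemma is proved.

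Your direct Mayer--Vietoris/spectral-sequence plan has a real gap, and it is the one you flag. Concretely: from the disconnection $\a,\b$ with $\dim R/(\a+\b)=i-1$ you get a short exact sequence
\[
0 \to H^{n-i}_\a(R)\oplus H^{n-i}_\b(R) \to H^{n-i}_I(R) \to N \to 0,
\]
and applying $H^\bullet_\m$ you need to show the image of $H^{i-1}_\m(H^{n-i}_I(R))$ in $H^{i-1}_\m(N)$ is nonzero. That forces you to control $H^{i-1}_\m$ and $H^{i}_\m$ of $H^{n-i}_\a(R)$ and $H^{n-i}_\b(R)$, i.e.\ the numbers $\lambda_{i-1,i}(R/\a)$, $\lambda_{i,i}(R/\a)$ and the same for $\b$; nothing in the hypotheses on $A$ bounds these. (Your spectral sequence for $M=H^{n-i+1}_{\a+\b}(R)$ does not help here: each $H^q_\m(M)$ is a finite direct sum of copies of $E$ by \cite[Lemma~2.2]{LyuInvariants}, hence injective, so $E_2^{p,q}=0$ for $p>0$ and the sequence just recovers $\Ext^q_R(\k,M)\cong \Hom_R(\k,H^q_\m(M))$.) The ``superdiagonal Mayer--Vietoris lemma'' you propose as an alternative runs into the same issue once $i>1$.

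The paper avoids this by \emph{not} doing the Mayer--Vietoris chase at level $i$. It introduces graphs $\graph_t(A)$ on the minimal primes (edges when $\Ht(\p+\q)\leq t$), so that $\cc(A)\geq d-t$ iff $\graph_t(A)$ is connected (Proposition~\ref{graphConn: P}), and proves the inequality $\lambda_{i,i+1}(A)\geq \#\graph_{d-i-1}(A)-1$ whenever $\graph_{d-i}(A)$ is connected (Proposition~\ref{graphConnectedGeneral: P}); the one-step lemma follows immediately. The inequality itself is obtained by \emph{reduction modulo a generic element}: one shows $\lambda_{i,i+1}(A)\geq \lambda_{i-1,i}(A/\eltL x\eltR)$ for suitable $x$ (Proposition~\ref{lambdaBound: P}), and separately that passing to $A/\eltL x\eltR$ preserves the number of components of the relevant graphs (Theorem~\ref{CCsNZD: T}). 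Iterating drops $i$ down to $1$, where a clean Mayer--Vietoris computation \emph{does} work (Proposition~\ref{baseCaselambda12: P}), because there the obstructing side term is $H^2_\m(H^{n-2}_\bullet(R))$, which vanishes for equidimensional quotients of dimension $\geq 3$. The missing idea in your plan is this reduction-mod-$x$ device; it is what converts the uncontrollable higher-$i$ bookkeeping into the tractable $i=1$ case.
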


The entries of the superdiagonal of the Lyubeznik table also determine a bound on the maximal number of connected components of $\Spec(A) \setminus Z$ among all closed subsets $Z$ of $\Spec(A)$ with at most a specified dimension; see Proposition \ref{countConnComp: P}.
 
 The technical framework designed to prove Theorems \ref{thm12intro: T} and \ref{generalIntro: T} 
 takes advantage of a family of graphs $\graph_t(A)$ that includes two that have previously been used to study the relationship between local cohomology and connectedness properties of spectra \cite{HunekeLyubeznik90, HochsterHuneke94, LyuInvariants, Zhang07}.  
Our method relies on comparing the connectedness dimension, and the Lyubeznik numbers, of the ring to the respective numerical invariants
modulo certain elements.

In particular, toward Theorem \ref{thm12intro: T}, analogous to the way that the highest Lyubeznik number counts the number of connected components of the Hochster-Huneke graph  \cite{LyuInvariants, Zhang07}, we prove and take advantage of the fact that the Lyubeznik number $\lambda_{1,2}(A)$ is completely determined by the number of connected components of certain supergraphs of the Hochster-Huneke graph (see Theorem \ref{MainTheorem: T}).

As a consequence of Theorem \ref{generalIntro: T}, in  Corollaries \ref{CDdepth: C} and  \ref{CDCD: C} we recover bounds comparing the connectedness dimension with cohomological dimension, and with depth \cite{FalNagoya, FalPRIMS, HochsterHuneke94, Varbaro09}.

\subsection*{Lyubeznik numbers of projective varieties}
Given an equidimensional projective variety  $X$ over a field $\k$, 
by choosing an embedding $X \hookrightarrow \PP^n_\k$, one can write $X=\Proj(R/I)$, where $I$ is a homogeneous ideal of the polynomial ring 
 $R=\k[x_0,\ldots,x_n]$. If $\m = \eltL x_0, \ldots, x_n \eltR$ is the homogeneous maximal ideal of $R$ and  $A = ( R/I )_\m$ is the local ring at the vertex of the affine cone over $X$,
Lyubeznik asked whether the Lyubeznik numbers $\lambda_{i,j}(A)$ are independent of the choice of embedding \cite{LyuSurveyLC}. 

The answer has been proven to be affirmative if $\k$ has prime characteristic \cite{WZ-projective}, or if $X$ is smooth \cite{SwitalaNonsingular}.
Additionally, the highest Lyubeznik number is always independent of the choice of embedding \cite{Zhang07}, as is  $\lambda_{0,1}(A)$  (cf.\,\cite[Proposition 3.1]{Walther01}).
The techniques developed to prove Theorem \ref{MainTheorem: T} enable us to prove that the same is true for $\lambda_{1,2}(A)$, and to explicitly characterize its value.

\begin{theoremx}[Theorem \ref{lambda12Proj: T}]
With the notation above, the Lyubeznik number  $\lambda_{1,2}(A)$ is independent of $n$, and of the choice of embedding of $X  \hookrightarrow \PP^n_\k$. 
\end{theoremx}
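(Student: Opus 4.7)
The plan is to apply Theorem \ref{MainTheorem: T}, which expresses $\lambda_{1,2}(A)$ purely in terms of the numbers of connected components of certain supergraphs $\graph_t(A)$ of the Hochster-Huneke graph $\graph(A)$. It will therefore suffice to prove that, for the local ring $A$ at the vertex of the affine cone over $X$, each of these graph-theoretic quantities depends only on $X$ and not on the chosen projective embedding.

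Given two embeddings $X \hookrightarrow \PP^{n_1}_\k$ and $X \hookrightarrow \PP^{n_2}_\k$ with associated cone-vertex local rings $A_1$ and $A_2$, I would follow the strategy used by \cite{Zhang07} for the highest Lyubeznik number (and by \cite{SwitalaNonsingular} in the smooth case): relate both embeddings to a common third embedding dominating them, for example via the Segre embedding $\PP^{n_1}_\k \times \PP^{n_2}_\k \hookrightarrow \PP^{(n_1+1)(n_2+1)-1}_\k$ applied to the diagonal copy of $X$. The problem then reduces to showing that the graph data of $A$ is preserved under this kind of re-embedding. For the underlying Hochster-Huneke graph $\graph(A)$ this is essentially immediate: its vertices are the minimal primes of the completion $\widehat{A}$, which correspond bijectively to the analytic branches of the cone over $X$ at the vertex, and the edge relation records intersection codimensions of these branches---both intrinsic to $X$.

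The main obstacle will be establishing the analogous intrinsicality for the supergraphs $\graph_t(A)$. These are constructed by modding $A$ out by sequences of elements, and the connected-component counts used in Theorem \ref{MainTheorem: T} must be shown to be independent of the particular elements chosen. My plan is to realize each $\graph_t(A)$ using a sequence of sufficiently generic linear forms in $R$, so that modding out corresponds geometrically to cutting the cone with a generic linear subspace. A Bertini-type argument for irreducibility of general hyperplane sections of the irreducible components of $X$, combined with a semicontinuity statement for the connected-components count under specialization, should show both that the graph is well defined (independent of the generic choice) and that the same generic sequence can be used consistently when comparing two presentations via the Segre reduction above. Granting these verifications, Theorem \ref{MainTheorem: T} immediately yields $\lambda_{1,2}(A_1) = \lambda_{1,2}(A_2)$, completing the argument.
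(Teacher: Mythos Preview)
Your overall strategy---reduce to Theorem \ref{MainTheorem: T} and then show the graph data is intrinsic to $X$---is exactly the paper's approach, and your first paragraph on $\graph_1(A)$ is essentially the right argument. The problem is a misreading of what $\graph_t(A)$ is. These graphs are \emph{not} constructed by modding $A$ out by sequences of elements; by Definition \ref{graph: D}, every $\graph_t(A)$ has the same vertex set as $\graph_1(A)$ (the minimal primes of $A$), and the edge relation is simply $\Ht_A(\p+\q)\le t$. So the reasoning you gave for the Hochster--Huneke graph applies verbatim for every $t$: the minimal primes of $\widehat{A}$ correspond to the irreducible components $Z_i$ of $X$, and $\Ht_{\widehat{A}}(\p_i\widehat{A}+\p_j\widehat{A}) = d - \dim(Z_i\cap Z_j)$, so $\graph_t(\widehat{A})\cong \graph_t(X)$, which manifestly ignores the embedding. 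This is Lemma \ref{graphsProj: L}. The Segre reduction, generic linear sections, Bertini, and semicontinuity are all unnecessary detours stemming from the misread definition.

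One genuine point you do not address: Theorem \ref{MainTheorem: T} requires a separably closed residue field. The paper handles this by first base-changing to $\overline{\k}$, using $\lambda_{i,j}(A)=\lambda_{i,j}(\widehat{A}\otimes_\k\overline{\k})$, and then applying Lemma \ref{graphsProj: L} and Theorem \ref{MainTheorem: T} to obtain the embedding-free formula $\lambda_{1,2}(A)=\#\graph_{d-1}(X\otimes_\k\overline{\k})-\#\graph_d(X\otimes_\k\overline{\k})$.
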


 In Theorem \ref{LyuProjBound: T}, we also establish a lower bound for the Lyubeznik numbers $\lambda_{i, i+1}(A)$, $1 < i < \dim(X)$, 
that is independent of the choice of embedding.  


\section{Connectedness of spectra and the graphs $\graph_\bullet$} \label{spectraGraph: S}

We begin this section by recalling some facts about connectedness of spectra.  
Next, we define a family of graphs associated to a ring, and point out a relationship between these graphs and connectedness properties of the spectrum of the ring.

The \emph{connectedness dimension} of a Noetherian ring $A$, denoted $\cc(A)$, is 
the minimal value of $\dim(X)$ among all closed sets $X$ of $\Spec(A)$ for which  $\Spec(A) \setminus X$ is disconnected. 
Given an ideal $\a$ of a Noetherian ring $A$, the topological space $\Spec(A) \setminus \V(\a)$ is disconnected if and only if there exist ideals $\b$ and $\c$ of $A$ such that $\sqrt{\b}, \sqrt{\c} \subsetneq \sqrt{\a}$, 
$\sqrt{\a \cap \b \cap \c} = \sqrt{0}$, and $\sqrt{\b + \c} \supseteq \sqrt{\a}$.
By convention, we assume that $\Spec(A) \setminus \Spec(A) = \varnothing$ is disconnected, so that 
if $A$ is an equidimensional local ring, then $0 \leq \cc(A) \leq \dim(A)$, and $\cc(A) = \dim(A)$ if and only if $A$ has one minimal prime \cite[19.1.10 and 19.2.5]{BrodmannSharpEd2}.

Suppose that $\p_1, \ldots, \p_s$ are the minimal primes of $A$.  Then the connectedness dimension of $A$ is the minimum of $\dim\( A /  ( \cap_{i \in \mathcal{S}} \p_i +  \cap_{j \in \mathcal{T}} \p_j ) \)$ among all subsets  $\mathcal{S}$ and $\mathcal{T}$ of $[s] = \{ 1, 2, \ldots, s\}$ such that $ \mathcal{S} \cup  \mathcal{T}  = [s]$ \cite[19.2.5]{BrodmannSharpEd2}

Recall that the \emph{arithmetic rank} of an ideal $\a$ of a ring, denoted $\ara(\a)$, is the least number of generators of an ideal with the same radical as $\a$.  
The following bound relating arithmetic rank and connectedness dimension is useful for us in the case that the ideal $\a$ is principal.  

\begin{remark}[Grothendieck's connectedness theorem] \label{connDimBound: R}
If $(A,\n)$ is an equidimensional complete local ring and $\a$ is a proper ideal of $A$, then 
\[
\cc(A/\a) \geq \min\{ \cc(A) , \dim(A) - 1 \} - \ara(\a)
\]
 \cite[Expos\'e XIII, Th\'eor\`eme 2.1]{GrothendieckSGA2},  \cite[19.2.10]{BrodmannSharpEd2}. 
 If $A$ has more than one minimal prime, then $\cc(A) < \dim(A)$ \cite[19.2.2]{BrodmannSharpEd2}.  Therefore, in this case, if $x \in \n$, then 
$\cc(A/\eltL x \eltR) \geq \cc(A) - 1$. 
\end{remark}

\transition{
We define a family of graphs associated to a ring, whose connectivity properties are closely related to connectedness properties of the ring's spectrum. 
}

\begin{definition} \label{graph: D}
Given an equidimensional local ring $A$ of dimension $d \geq 2$, and an integer $1 \leq t \leq d-1$, we define a graph $\graph_t(A)$ the following way:
\begin{enumerate}
\item The vertices of $\graph_t(A)$ are indexed by the minimal primes of $A$, and 
\item There is an edge between distinct vertices $\p$ and $\q$ if and only if \[\Ht_A(\p + \q) \leq t.\]
\end{enumerate}
\end{definition}

\noindent For any $1 \leq t < s \leq d - 1$, $\graph_t(A)$ is a subgraph of $\graph_{s}(A)$ with the same vertices. 
Therefore, if $\graph_t(A)$ is connected, then so is $\graph_s(A)$. 

\begin{remark}
If $A$ is an equidimensional complete local ring of dimension $d$ (so that $A$ is catenary), then  for every ideal $I$ of $A$, 
$
\Ht_A(I)=d-\dim(A/I). 
$
Accordingly, for $1 \leq t \leq d - 1$, there exists an edge between vertices $\p \neq \q$ of $\graph_t(A)$ precisely if $\dim\left( A/(\p + \q) \right) \geq d-t$.
\end{remark}

The following example illustrates some features of our results and is referenced in later sections.

\begin{example}\label{graphintro: E}
Given a field $\k$, let $I$ be the ideal $\eltL x,y \eltR \cap \eltL z,w \eltR \cap \eltL u,v\eltR$ of the ring $R = \k\llbracket x, y, z, w, u, v \rrbracket$.
Let $A = R/I$, hence $\dim(A) = 4$.  The three vertices of the graphs $\graph_t(A)$ correspond to the minimal primes $\eltL x,y \eltR, \eltL z,w \eltR$, and $\eltL u,v \eltR$ of $A$. The graph $\graph_1(A)$ has no edges, while the graphs $\graph_{2}(A)$ and $\graph_{3}(A)$ are complete.
\end{example}

The two graphs $\graph_1(A)$ and $\graph_{d-1}(A)$ have played an important role in studying cohomological dimension and connectedness properties of spectra.  
Suppose that $A$ is equidimensional and has a separably closed residue field. 
The graph $\graph_1(A)$ is the \emph{Hochster-Huneke graph} (or the \emph{dual graph}) of $A$,  often denoted $\Gamma_A$ in the literature \cite[Definition 3.4]{HochsterHuneke94}, \cite{Hartshorne62}. 
This graph is connected if and only if $\cc(A) \geq d - 1$ \cite[Theorem 3.6]{HochsterHuneke94}.  
Moreover, the highest Lyubeznik number $\lambda_{d,d}(A)$ equals the number of connected components of $\graph_1(A)$  \cite[Theorem 1.3]{LyuInvariants}, \cite[Main Theorem]{Zhang07}.

The fact that when $\dim(A) \geq 2$, $\graph_{d-1}(A)$ is connected if and only if $\Spec^\circ(A)$ is connected, i.e., $\cc(A) \geq 1$, is used in Huneke and Lyubeznik's proof of the second vanishing theorem of local cohomology \cite[Proof of Theorem 2.9]{HunekeLyubeznik90}. 
In fact, the number of connected components of $\graph_{d-1}(A)$ 
and $\Spec^\circ(A)$ coincide, and equal  $\lambda_{0,1}(A) - 1$ when $\dim(A) \geq 2$ (see \cite[Proposition 3.1]{Walther01} for the dimension two case). 
The graph $\graph_{d-1}(A)$ is the $1$-skeleton of a simplicial complex whose homology has connections with cohomological dimension and depth \cite[Theorem 1.1]{LyuLC}, \cite[Theorem 1.2]{ExtHar}, \cite[Proposition 3.6]{DaoTakagi}.


The following proposition, which relates the graphs $\graph_\bullet(A)$ to the connectedness of subsets of the spectrum of $A$, follows from \cite[Proposition 1.1]{Hartshorne62}, and generalizes \cite[Theorem 3.6]{HochsterHuneke94}.
We crucially rely on the fact that these graphs determine the connectedness dimension of the ring.

\begin{proposition} \label{graphConn: P}
Let $A$ be an equidimensional complete local ring of dimension $d \geq 2$.
Given $1 \leq t \leq d - 1$, 
$\graph_t(A)$ is connected if and only if $\cc(A) \geq d - t$; i.e., 
\[ \cc(A)= \max\{ i \geq 1  \;|\; \graph_{d-i}(A) \textup{ is connected\,}\} .\]
\end{proposition}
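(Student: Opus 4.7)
The plan is to translate both $\cc(A) \geq d-t$ and the connectedness of $\graph_t(A)$ into the same combinatorial condition on the minimal primes $\p_1, \ldots, \p_s$ of $A$, which index the vertices of $\graph_t(A)$. When $s = 1$, $\cc(A) = d$ and $\graph_t(A)$ is a single vertex, hence trivially connected, so the equivalence holds; I therefore assume $s \geq 2$.

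By the partition characterization of connectedness dimension recorded earlier in this section, $\cc(A)$ equals the minimum of $\dim(A/(\bigcap_{i \in \mathcal{S}} \p_i + \bigcap_{j \in \mathcal{T}} \p_j))$ over subsets $\mathcal{S}, \mathcal{T}$ of $[s]$ with $\mathcal{S} \cup \mathcal{T} = [s]$. If $\mathcal{S} \cap \mathcal{T} \neq \varnothing$, then the sum is contained in some $\p_k$ and the quotient has dimension $d$ by equidimensionality, which cannot lower the minimum; so one may restrict to partitions $\mathcal{S} \sqcup \mathcal{T} = [s]$ with both parts nonempty. On the graph side, $\graph_t(A)$ is connected precisely when every such partition admits a crossing edge, i.e., some $i \in \mathcal{S}$ and $j \in \mathcal{T}$ with $\Ht(\p_i + \p_j) \leq t$; since $A$ is equidimensional and complete (hence catenary), this is equivalent to $\dim(A/(\p_i + \p_j)) \geq d - t$.

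The bridge between the two descriptions is the dimension identity
\[
\dim\!\left(A \Big/ \Bigl(\bigcap_{i \in \mathcal{S}} \p_i + \bigcap_{j \in \mathcal{T}} \p_j\Bigr)\right) = \max_{i \in \mathcal{S},\, j \in \mathcal{T}} \dim(A/(\p_i + \p_j)),
\]
valid for every nontrivial partition. The inequality $\geq$ is immediate from the containment $\bigcap_{i \in \mathcal{S}} \p_i + \bigcap_{j \in \mathcal{T}} \p_j \subseteq \p_i + \p_j$ for each admissible pair. For $\leq$, I would pick a prime $\r$ minimal over the left-hand ideal with $\dim(A/\r)$ attaining the displayed dimension; since $\r$ contains both $\bigcap_{i \in \mathcal{S}} \p_i$ and $\bigcap_{j \in \mathcal{T}} \p_j$, primality forces $\r \supseteq \p_i$ for some $i \in \mathcal{S}$ and $\r \supseteq \p_j$ for some $j \in \mathcal{T}$, whence $\r \supseteq \p_i + \p_j$ and $\dim(A/\r) \leq \dim(A/(\p_i + \p_j))$.

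Assembling the pieces: $\cc(A) \geq d-t$ iff every nontrivial partition $\mathcal{S} \sqcup \mathcal{T} = [s]$ yields $\max_{i,j}\dim(A/(\p_i + \p_j)) \geq d-t$, iff every such partition is crossed by an edge of $\graph_t(A)$, iff $\graph_t(A)$ is connected. The main obstacle is really just bookkeeping — the dimension identity carries the mathematical content — but care is needed to invoke equidimensionality both to discard overlapping pairs $(\mathcal{S},\mathcal{T})$ from the Brodmann–Sharp minimum and to translate freely between $\Ht$ and $\dim$ via the catenary property, along with a separate check of the single-minimal-prime case.
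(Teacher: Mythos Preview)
Your proof is correct. The paper takes a slightly different route: rather than the Brodmann--Sharp partition formula for $\cc(A)$, it invokes Hartshorne's chain criterion \cite[Proposition~1.1]{Hartshorne62}, which says that $\Spec(A)\setminus Y$ is connected for all closed $Y$ of dimension $\leq d-t-1$ precisely when any two minimal primes are linked by a sequence of minimal primes whose consecutive sums have height $\leq t$. This is exactly path-connectedness of $\graph_t(A)$, so the paper's argument is a single citation. Your approach is more self-contained: you trade the topological input for the partition description of $\cc(A)$ and the dimension identity $\dim\bigl(A/(\bigcap_{\mathcal S}\p_i+\bigcap_{\mathcal T}\p_j)\bigr)=\max_{i,j}\dim(A/(\p_i+\p_j))$, which is essentially the height formula the paper records in the Remark immediately following the proposition. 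Either way the mathematical content is the same reduction to combinatorics of minimal primes; your version just unpacks it by hand rather than citing Hartshorne.
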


\begin{proof}
Since $X = \Spec(A)$ is a Noetherian topological space, $X \setminus Y$ is connected for all closed sets $Y$ of $X$ for which $\dim(Y) \leq d-t - 1$ (i.e., $\codim( Y, X) > t$)
if and only if for all minimal prime ideals $\q$ and $\r$ of $A$, there exists a sequence of minimal primes 
$
\q = \p_1, \p_2, \ldots, \p_{s} = \r
$
such that for all $1 \leq j < s$, $\dim\V(\p_j) \cap \V(\p_{j+1}) \geq d - t $, i.e., $\Ht_A(\p_j +\p_{j+1}) \leq t$  \cite[Proposition 1.1]{Hartshorne62}. 
\end{proof}

We call upon the following observation frequently, beginning with the proof of the corollary below.

\begin{remark}
Given ideals $\a$, $\b$, and $\c$ of a ring, 
$\sqrt{\a + (\b \cap \c)} = \sqrt{(\a + \b) \cap (\a + \c)}$.  
Indeed, we have that 
\begin{equation*} \label{inclusionsRadical: e}
(\a + \b)(\a + \c) \subseteq  \a + (\b \cap \c) \subseteq (\a + \b) \cap (\a + \c),
\end{equation*}
and since $(\a + \b)(\a + \c)$ and $(\a + \b) \cap (\a + \c)$ have the same radical, we obtain our desired conclusion by 
taking radicals. 

Then by an inductive argument, 
given ideals $\a_i$, $1 \leq i \leq u$, and $\b_j$, $1 \leq j \leq v$, 
if $\a = \cap_{i=1}^u \a_i$ and $\b = \cap_{j=1}^v \b_j$, then 
$\sqrt{ \a + \b } = \cap_{i=1}^u \cap_{j=1}^v  \sqrt{  \a_i + \b_j}$, so that 
\[\Ht_A( \a + \b ) = \min\{ \Ht_A(\a_i + \b_j) \mid 1 \leq i \leq u, 1 \leq j \leq v \}.\]  
\end{remark}

%

\begin{notation*}
Let $\# G$ denote the number of connected components of a graph $G$, and $\#X$ the number of connected components of a topological space $X$.  
\end{notation*}

Not only can the graphs $\graph_\bullet(A)$ detect the connectedness dimension of $A$, but they also determine a more refined numerical invariant of $A$. 

%

\begin{corollary} \label{numCC: C}
Given an equidimensional complete local ring $A$ of dimension $d \geq 2$, and an integer $1 \leq t \leq d - 1$,
\[
\#\graph_t(A) = \max\{  \#\(\Spec(A) \setminus X \)  \mid X \subseteq \Spec(A) \ \textup{closed},\, \dim(X) \leq d-t - 1 \}.
\]
\end{corollary}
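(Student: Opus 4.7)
The plan is to prove the stated equality by exhibiting, for each connected component of $\graph_t(A)$, a corresponding connected piece of $\Spec(A) \setminus X$, then extracting both inequalities. Write $C_1, \ldots, C_s$ for the connected components of $\graph_t(A)$, and set $Y_i = \bigcup_{\p \in C_i} \V(\p) \subseteq \Spec(A)$, so that $\Spec(A) = Y_1 \cup \cdots \cup Y_s$ and each $Y_i$ is a closed, equidimensional set of dimension $d$.

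For the inequality $\#\eltL\Spec(A) \setminus X\eltR \leq \#\graph_t(A)$ for every closed $X$ with $\dim(X) \leq d - t - 1$, the key observation is that each $Y_i \setminus X$ is connected. First, for any minimal prime $\p$, $\V(\p) \setminus X$ is a nonempty open subset of the irreducible set $\V(\p)$ (nonempty because $\dim \V(\p) = d > \dim X$ by equidimensionality), hence connected. Second, for an edge $\p \sim \q$ in $\graph_t(A)$, $\V(\p + \q)$ has dimension $\geq d - t > \dim X$ by definition of an edge, so $\eltL\V(\p) \setminus X\eltR \cap \eltL\V(\q) \setminus X\eltR \neq \varnothing$; traversing a spanning tree of $C_i$ then exhibits $Y_i \setminus X$ as a connected union. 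Therefore $\Spec(A) \setminus X = \bigcup_{i=1}^s \eltL Y_i \setminus X\eltR$ is covered by $s$ connected subsets, giving the bound.

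For the reverse inequality, I would construct a witnessing $X$ explicitly, namely
\[
X \;=\; \bigcup_{i \neq j} \;\bigcup_{\substack{\p \in C_i \\ \q \in C_j}} \V(\p + \q).
\]
Because $\p \in C_i$ and $\q \in C_j$ lie in distinct components, there is no edge between them, so $\Ht_A(\p + \q) > t$, yielding $\dim \V(\p + \q) \leq d - t - 1$; as a finite union, $\dim X \leq d - t - 1$. Invoking the remark immediately preceding the corollary, $Y_i \cap Y_j$ shares its radical with $\bigcap_{\p \in C_i,\, \q \in C_j}(\p + \q)$, so $Y_i \cap Y_j \subseteq X$ for all $i \neq j$. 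Thus the sets $Y_i \setminus X$ are pairwise disjoint; each is connected by the previous paragraph, closed in $\Spec(A) \setminus X$ (since $Y_i$ is closed in $\Spec(A)$), and also open there (as the complement of $\bigcup_{j \neq i} \eltL Y_j \setminus X\eltR$). Hence the $Y_i \setminus X$ are exactly the $s$ connected components of $\Spec(A) \setminus X$.

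I expect the main subtlety to be the lower bound, specifically verifying that the constructed $X$ really separates the sets $Y_i \setminus X$: this requires translating the ``no edge'' condition, via the $\Ht_A$-of-sum-intersection identity from the remark above, into the global inclusion $Y_i \cap Y_j \subseteq X$ for $i \neq j$. The upper bound and the verification that each $Y_i \setminus X$ is connected are essentially the same traversal argument that underlies Proposition \ref{graphConn: P}, now applied inside each component separately rather than across the whole graph.
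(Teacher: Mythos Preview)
Your proof is correct and follows the same overall architecture as the paper's: the witnessing closed set $X$ you construct is, up to radical, exactly the paper's $\V(\a)$ where $\a = \bigcap_{i<j}(\b_i+\b_j)$ and $\b_i$ is the intersection of the minimal primes in the $i^{\text{th}}$ component.

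The execution differs in two places. For the upper bound $\#(\Spec(A)\setminus X)\leq s$, the paper appeals to the algebraic characterization of a disconnection (ideals $\b_1,\ldots,\b_m$ with $\sqrt{\b_i+\b_j}\supseteq\sqrt{\a}$) and deduces that minimal primes of distinct $\b_i$'s land in distinct components of $\graph_t(A)$; you instead give a direct topological traversal, showing each $Y_i\setminus X$ is connected because the irreducible pieces $\V(\p)\setminus X$ meet along edges. For the connectedness of each piece $Y_i\setminus X$ in the lower bound, the paper reduces to $A/\b_i$ and invokes Proposition~\ref{graphConn: P} (hence Hartshorne's criterion) to get $\cc(A/\b_i)\geq d-t$, whereas you reuse your traversal argument for the specific $X$. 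Your route is a bit more self-contained, essentially inlining the content of Proposition~\ref{graphConn: P} rather than citing it; the paper's route makes the dependence on connectedness dimension more visible and is shorter once that proposition is in hand.
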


\begin{proof}
When $\graph_t(A)$ is connected, the statement holds by Proposition \ref{graphConn: P}.  
Therefore, we address the case that the graph is disconnected.  

Given a ring $A$ satisfying our hypotheses, 
let $\conncomp = \#\graph_t(A)$, and let $\Sigma_1, \ldots, \Sigma_{\conncomp}$ denote the connected components of $\graph_t(A)$.  
For $1 \leq i \leq \conncomp$, let $\b_i$ denote the ideal of $A$ that is the intersection of the vertices of $\Sigma_i$.  

Let $\a = \cap_{1 \leq i < j \leq \conncomp} (\b_i + \b_j)$.  By writing each $\b_i$ as the intersection of its minimal primes, we see that $\Ht_A(\a) \geq t + 1$, 
since for $i \neq j$,  there is no edge between any minimal prime of $\b_i$ and any minimal prime of $\b_j$ in $\graph_t(A)$.
Thus,   $\dim(\V(\a)) \leq d - t - 1$.

We claim that  $X = \Spec(A) \setminus \V(\a)$  has exactly $\conncomp$ connected components, given by the closed subsets $X_i = \V(\b_i) \cap X$ of $X$, for $1 \leq i \leq \conncomp$.
Since for each $1 \leq i \leq \conncomp$, $\Ht_A(\b_i) = 0 < 2 \leq t + 1 \leq \Ht_A(\a)$, $\sqrt{\b_i} \not\supseteq \sqrt{\a}$, and $X_i \neq \varnothing$.

It is clear by definition of the ideals $\b_i$ that $\sqrt{\cap_i \b_i} = \sqrt{0}$, so that
 $\cup_{i=1}^\conncomp \V(\b_i) = \Spec(A)$, and
 $\cup_{i=1}^\conncomp X_i = X$. 
Moreover, for each $1 \leq i \leq \conncomp$, $(\cup_{j \neq i} X_j)  \cap X_i = \varnothing$ since for all such $i$, 
\[
 \sqrt{ \( \cap_{j \neq i} \b_i \)+ \b_j } = \sqrt{\cap_{j \neq i} (\b_i + \b_j) } \supseteq \sqrt{\cap_{1 \leq u  < v \leq \conncomp} (\b_u + \b_v) } = \sqrt{\a}.
\]

Therefore, to show that $X$ has $s$ connected components, it suffices to show that each $X_i$ is connected in $X$.
Since $X_i = \V(\b_i) \setminus \V(\b_i) \cap \V(\a) =\V(\b_i) \setminus \V(\b_i + \a)$, it suffices to show that $\Spec\(A/\b_i\) \setminus \V(\b_i + \a)$ is connected.  
Note that $A/\b_i$ is an equidimensional ring of dimension $d$, and $\graph_t\( A/\b_i \)$ is connected since it is isomorphic to the connected component of $\graph_t(A)$ with vertices corresponding to the minimal primes of $\b_i$. 
Therefore, by Proposition \ref{graphConn: P}, $\cc\(A/\b_i\) \geq d - t$. 
Since $\Ht_A(\a) \geq t + 1$, $\dim(A/\a) \leq d - t - 1$, so that $\Spec\(A/\b_i\) \setminus \V(\b_i + \a)$ must indeed be connected.

Our final step is to prove that $\#\(\Spec(A) \setminus \V(\a) \) \leq \conncomp$ for every ideal $\a$ of $A$ for which $\dim(A/\a) \leq d-t - 1$.  
Toward this, fix an ideal $\a$ of height at least $t + 1$.  
If $\Spec(A) \setminus \V(\a)$ is connected, the statement clearly holds.
Otherwise, if it has $\counter > 1$ connected components, then there exist ideals $\b_1, \cdots, \b_\counter$ for which 
$\sqrt{\a} \subseteq \sqrt{ \( \cap_{i \neq j} \b_i \) + \b_j } =  \cap_{i \neq j} \sqrt{ \b_i + \b_j }$. 
In particular, $\Ht_A(\b_i + \b_j) \geq \Ht_A(\a) \geq t+1$ for all $1 \leq i < j \leq \counter$, so that the minimal primes of $\b_i$ and of $\b_j$ must lie in distinct connected components of 
$\graph_t(A)$.  This means that $\counter \leq \conncomp$.

\end{proof}

\section{The graphs $\graph_\bullet$ modulo $x$}  \label{graphNZD: S}

In this section, we compare the graphs $\graph_\bullet(A)$ to $\graph_\bullet(A/\eltL x \eltR)$, where $x$ is a certain element of $A$, with the goal of finding such an $x$ for which these graphs have the same number of connected components (see Theorem \ref{CCsNZD: T}).

\begin{remark}
Let $(A, \n)$ be an equidimensional complete local ring containing a field of dimension $d \geq 1$. 
Fix $x\in \n$ that is in no minimal prime of $A$.
Given any minimal prime $\p$ of $\eltL x \eltR$, $\Ht_A(\p) \leq 1$ by Krull's principal ideal theorem, and $\Ht_A(\p) \geq 1$ by our choice of $x$.  
This means that $\Ht_A(\p)=1$, so that $\dim(A/\p)=d-1$. 
Hence, $A/\eltL x \eltR$ is also equidimensional.
\end{remark}

Our first goal is to show that if the graph associated to a ring is connected, then so is the graph modulo certain elements.  
Zhang proved this for the  Hochster-Huneke graph \cite[Proposition 2.2]{Zhang07}.
Our method is different, using Grothendieck's connectedness theorem and the relationship between our graphs and connectedness dimension, 
but we take advantage of Zhang's result by
comparing our more general graphs to the Hochster-Huneke graph.

\begin{proposition} \label{graphModuloNZD: P}
Let $(A, \n)$ be an equidimensional complete local ring containing a field of dimension $d \geq 3$, with a separably closed residue field. 
Fix $x\in \n$ that is in no minimal prime of $A$.  
If $1 \leq t \leq d-2$ and $\graph_t(A)$ is connected, then $\graph_t(A/\eltL x \eltR)$ is also connected.  
\end{proposition}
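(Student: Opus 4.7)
The plan is to translate the hypothesis on graph connectivity into a numerical lower bound on $\cc(A)$, transfer that bound to $A/\eltL x \eltR$ via Grothendieck's connectedness theorem, and then translate back to the connectivity of $\graph_t(A/\eltL x \eltR)$. This routes everything through Proposition~\ref{graphConn: P} and sidesteps any direct combinatorial comparison of the two graphs.

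Carrying this out, from $\graph_t(A)$ being connected Proposition~\ref{graphConn: P} yields $\cc(A) \geq d - t$. Since $x \in \n$ lies in no minimal prime of $A$ it is not nilpotent, so $\ara(\eltL x \eltR) = 1$, and Remark~\ref{connDimBound: R} (Grothendieck's connectedness theorem) gives
\[
\cc(A/\eltL x \eltR) \;\geq\; \min\{\cc(A),\,d-1\} - 1 \;\geq\; \min\{d-t,\,d-1\} - 1 \;=\; d - t - 1,
\]
where the last equality uses $t \geq 1$. As noted at the start of this section, $A/\eltL x \eltR$ is equidimensional of dimension $d - 1 \geq 2$, and the hypothesis $1 \leq t \leq d-2$ places $t$ in the range $1 \leq t \leq \dim(A/\eltL x \eltR) - 1$ to which Proposition~\ref{graphConn: P} applies. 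That proposition, read for $A/\eltL x \eltR$, says $\graph_t(A/\eltL x \eltR)$ is connected if and only if $\cc(A/\eltL x \eltR) \geq (d-1) - t = d - t - 1$, which is exactly the bound just obtained.

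The delicate point is the edge case $\cc(A) = d$, i.e., when $A$ has a unique minimal prime: then $\graph_t(A)$ is trivially connected as a single vertex, and Grothendieck's theorem only supplies the weaker bound $\cc(A/\eltL x \eltR) \geq d - 2$ rather than $\cc(A) - 1 = d - 1$. The hypothesis $t \leq d - 2$ (equivalently $d - t - 1 \leq d - 2$) is precisely what makes this weaker bound still suffice, and it is also what keeps $\graph_t(A/\eltL x \eltR)$ within the admissible range of graphs. Both uses of the bound $t \leq d-2$ are essential; without them the edge case breaks.
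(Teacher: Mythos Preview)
Your proof is correct and follows the same overall strategy as the paper's: translate graph connectivity into a bound on $\cc(A)$ via Proposition~\ref{graphConn: P}, push the bound to $A/\eltL x \eltR$ using Grothendieck's connectedness theorem, and translate back. The difference lies in how the single-minimal-prime case is handled. The paper splits into cases: when $A$ has more than one minimal prime it uses the simplified bound $\cc(A/\eltL x \eltR) \geq \cc(A) - 1$ from Remark~\ref{connDimBound: R}, while when $A$ has a unique minimal prime it reduces to the domain case and invokes Zhang's theorem that the Hochster-Huneke graph $\graph_1$ stays connected modulo $x$, then passes to the supergraph $\graph_t$. You instead apply the full form $\cc(A/\eltL x \eltR) \geq \min\{\cc(A),\,d-1\} - 1$ uniformly and observe that the hypothesis $t \leq d-2$ makes the resulting bound $d-t-1$ exactly what is needed even when $\cc(A) = d$. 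Your route is a bit more economical, since it avoids the appeal to Zhang's result; the paper's route, on the other hand, makes explicit the connection to the Hochster-Huneke graph that motivates later arguments.
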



\begin{proof}
In the case that $A$ has more than one minimal prime, the result follows by Proposition \ref{graphConn: P} since $\cc(A/\eltL x \eltR) \geq \cc(A) -1$ by Grothendieck's connectedness theorem (see Remark \ref{connDimBound: R}).
On the other hand, suppose that $A$ has one minimal prime, $\p$.
Then the graphs $\graph_t(A)$ and $\graph_t(A/\p)$ each consist of one vertex, and since $\sqrt{\eltL x \eltR}   =  \sqrt{ \p + \eltL x \eltR}$, 
$\graph_t(A/\eltL x \eltR)$ and $\graph_t(A/ (\p +  \eltL x \eltR) )$ are also isomorphic graphs.
Thus, we can replace $A$ by $A/\p$ and assume $A$ is a domain, hence reduced.  
In this case, since the Hochster-Huneke graph $\graph_1(A)$ is connected,  $\graph_1(A/\eltL x \eltR)$ is also connected 
 \cite[Proposition 2.2]{Zhang07} (see also \cite[Proposition 3.5]{HNBPWInjDim}). 
Thus, the supergraph $\graph_t (A/\eltL x \eltR)$ with the same vertices is also connected.
\end{proof}

The following observation is useful in further relating the graphs associated to a ring with those associated to a quotient of the ring. 

\begin{lemma} \label{minimalPrimes: L}
Let $(A, \n)$ be an  equidimensional complete local ring, and fix $x \in \n$ in no minimal prime of $A$.  
Then every minimal prime of $\eltL x \eltR$ contains a minimal prime of $A$, and every minimal prime of $A$ is contained in some minimal prime of $\eltL x \eltR$.
\end{lemma}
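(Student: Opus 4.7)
The plan is to handle the two statements separately. The first, that every minimal prime of $\eltL x \eltR$ contains a minimal prime of $A$, is formal: any prime of $A$ contains some minimal prime of $A$, and the hypotheses on $x$ and on equidimensionality play no role here. One sentence will suffice.

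The substantive content lies in the second statement. Fix a minimal prime $\p$ of $A$. Since by hypothesis $x \notin \p$, the image $\overline{x}$ of $x$ in the domain $A/\p$ is nonzero, so Krull's principal ideal theorem furnishes a prime $\overline{\q}$ of $A/\p$ that is minimal over $\overline{x}$ and has height exactly one. Pulling back to $A$ produces a prime $\q \supseteq \p + \eltL x \eltR$. Since $\q \supseteq \eltL x \eltR$, some minimal prime $\q'$ of $\eltL x \eltR$ in $A$ satisfies $\q' \subseteq \q$, and I would then aim to show $\q' = \q$, so that $\q$ itself is a minimal prime of $\eltL x \eltR$ containing $\p$.

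To force $\q' = \q$, I would use that $A$ is complete local (hence catenary) and equidimensional, so that $\Ht_A(\r) + \dim(A/\r) = d$ for every prime $\r$ of $A$. Since $x$ lies in no minimal prime of $A$ and $\q'$ is minimal over $\eltL x \eltR$, Krull's theorem gives $\Ht_A(\q') = 1$, hence $\dim(A/\q') = d - 1$. On the other hand, $A/\p$ is a complete local domain of dimension $d$, and $\overline{\q}$ has height one in $A/\p$, so $\dim(A/\q) = \dim((A/\p)/\overline{\q}) = d - 1$. Now $A/\q'$ is a complete local domain of dimension $d-1$ in which $\q/\q'$ is a prime with $\dim((A/\q')/(\q/\q')) = d - 1$; in a catenary domain, the only such prime is the zero ideal, so $\q = \q'$.

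The main obstacle, though mild, is the verification $\q' = \q$: one must be careful to invoke catenarity and equidimensionality in the right places so that the height-dimension bookkeeping goes through. Everything else reduces to standard applications of Krull's principal ideal theorem.
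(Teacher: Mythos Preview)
Your proposal is correct and follows essentially the same route as the paper. Both arguments construct a height-one prime containing $\p + \eltL x \eltR$ via Krull's principal ideal theorem and then verify it is minimal over $\eltL x \eltR$ using that $A$ is catenary and equidimensional; the paper phrases this last step more tersely (``since $\Ht_A \eltL x \eltR = 1$, $P$ is also a minimal prime of $\eltL x \eltR$''), while you make the comparison $\q' = \q$ explicit, but the content is the same.
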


\begin{proof}
If $\p_1, \ldots, \p_s$ denote the minimal primes of $A$, 
the first statement holds because $\sqrt{\eltL x \eltR} = \sqrt{\eltL 0\eltR + \eltL x \eltR} = \sqrt{ (\cap_{i=1}^s \p_i) + \eltL x \eltR} = \cap_{i=1}^s \sqrt{\p_i + \eltL x \eltR}$.

On the other hand, since $x$ is in no minimal prime $\p$ of $A$, $\dim(A/(\p + \eltL x \eltR)) = \dim(A/\p) - 1 = \dim(A) -1$  by Krull's principal ideal theorem, and $\Ht_A(\p + \eltL x \eltR) = 1$. 
 Fix a minimal prime $P$ of $\p + \eltL x \eltR$ of height one. 
Then $\eltL x \eltR \subseteq \p + \eltL x \eltR \subseteq P$, and since $\Ht_A\eltL x \eltR = 1$, $P$ is also a minimal prime of $\eltL x \eltR$.
\end{proof}

To obtain a partial converse of Proposition \ref{graphModuloNZD: P}, in which one places a stronger restriction on the choice element $x$, we define the following. 

\begin{definition} \label{primeSet: D}
Given a local ring $(A, \n)$, define 
\begin{equation*} \label{avoidanceSet: e}
\Xi(A) = \{ \p + \q \mid \p, \q \text{ minimal primes of } A \text{ for which } \sqrt{\p + \q} \neq \n \}.
\end{equation*}
If $\dim(A) > 0$, then every minimal prime $\p$ of $A$ is in $\Xi(A)$, by taking $\q = \p$.
\end{definition}

\begin{proposition} \label{graphModuloNZD2: P}
Let $(A, \n)$ be an equidimensional complete local ring containing a field of dimension $d \geq 3$.  
Fix $x \in \n$ that is in no minimal prime of any ideal in $\Xi(A)$ (see Definition \ref{primeSet: D}).
Then for any $1 \leq t \leq d-2$, if $\graph_t(A/\eltL x \eltR)$ is connected, then $\graph_t(A)$ is also connected.  
\end{proposition}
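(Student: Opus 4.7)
The plan is to lift a path in $\graph_t(A/\eltL x \eltR)$ back to a path in $\graph_t(A)$, using the avoidance hypothesis on $x$ to correct for the dimension shift introduced by the quotient. Fix two minimal primes $\p$ and $\q$ of $A$. By Lemma \ref{minimalPrimes: L}, there exist minimal primes $P$ and $Q$ of $\eltL x \eltR$, hence vertices of $\graph_t(A/\eltL x \eltR)$, with $\p \subseteq P$ and $\q \subseteq Q$. The assumed connectedness of $\graph_t(A/\eltL x \eltR)$ then supplies a path $P = Q_0, Q_1, \ldots, Q_m = Q$. Applying Lemma \ref{minimalPrimes: L} again, I would select a minimal prime $\p_k$ of $A$ contained in $Q_k$ for each $k$, with $\p_0 = \p$ and $\p_m = \q$. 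It then suffices to verify that consecutive $\p_k$ and $\p_{k+1}$ are equal or adjacent in $\graph_t(A)$.

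Fix $k < m$ and set $J = \p_k + \p_{k+1}$. Since $A/\eltL x \eltR$ is catenary and equidimensional of dimension $d - 1$, the edge $Q_k \sim Q_{k+1}$ is equivalent to $\dim\(A/(Q_k + Q_{k+1})\) \geq d - t - 1$. Because $J + \eltL x \eltR \subseteq Q_k + Q_{k+1}$, the same lower bound applies to $\dim\(A/(J + \eltL x \eltR)\)$. Crucially, the assumption $t \leq d - 2$ forces this dimension to be at least $1$, so $J$ cannot be $\n$-primary; equivalently, $J \in \Xi(A)$.

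By the avoidance hypothesis on $x$, then, $x$ lies in no minimal prime of $J$. For each such minimal prime $\r$, $A/\r$ is a catenary local domain, so Krull's principal ideal theorem gives $\dim\(A/(\r + \eltL x \eltR)\) = \dim(A/\r) - 1$; taking the maximum over all minimal primes of $J$ yields $\dim\(A/(J + \eltL x \eltR)\) = \dim(A/J) - 1$, whence $\dim(A/J) \geq d - t$ and $\Ht_A(J) \leq t$. This produces the required edge, or a collapse to a single vertex when $\p_k = \p_{k+1}$; concatenating gives a path from $\p$ to $\q$ in $\graph_t(A)$, proving connectedness. The main obstacle I anticipate is precisely the identification $J \in \Xi(A)$: the entire lift hinges on it, and it is secured only by the range restriction $t \leq d - 2$ combined with the careful edge-to-dimension translation in $A/\eltL x \eltR$; if $t = d-1$ were permitted, $J$ could be $\n$-primary, the hypothesis on $x$ would be vacuous for $J$, and the final dimension bump by one could fail.
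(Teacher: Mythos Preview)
Your argument is correct, and it takes a genuinely different route from the paper's proof. The paper does not lift paths explicitly; instead it passes through connectedness dimension via Proposition~\ref{graphConn: P}, reducing the claim to the inequality $\cc(A/\eltL x \eltR) \leq \cc(A) - 1$. It then realizes $\cc(A)$ as $\dim\bigl(A / (\cap_{i \in \mathcal{S}} \p_i + \cap_{j \in \mathcal{T}} \p_j)\bigr)$ for a suitable partition of the minimal primes of $A$, pushes this down modulo $x$ using the same Krull--principal-ideal argument you use (the avoidance hypothesis on $x$ guaranteeing that each relevant $\p_i + \p_j$ drops in dimension by exactly one), and compares the result with the analogous description of $\cc(A/\eltL x \eltR)$ in terms of the minimal primes of $\eltL x \eltR$. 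Your approach is more concrete and stays entirely within the graph picture, avoiding the detour through $\cc$; the paper's approach is more global, handling all minimal primes at once through the partition formula rather than edge by edge. Both hinge on the identical mechanism---forcing $\p_i + \p_j \in \Xi(A)$ so that the hypothesis on $x$ applies---and both rely on the restriction $t \leq d-2$ at exactly the point you identify.
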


\begin{proof}
By Proposition \ref{graphConn: P}, it is enough to show that if $\cc( A/\eltL x \eltR ) \geq d - t - 1$, then $\cc(A) \geq d - t$; i.e., that if $\cc( A/\eltL x \eltR ) \geq m$ for some $1 \leq m \leq d - 2$, then $\cc( A/\eltL x \eltR ) \leq \cc(A) - 1$. 
Toward this, suppose that $\p_1, \ldots, \p_s$ are the minimal primes of $A$, and fix subsets $\mathcal{S}$ and $\mathcal{T}$ of $[s]$ for which  $\mathcal{S} \cup \mathcal{T} = [s]$ and
$\cc(A) = \dim\(A/ (\cap_{i \in \mathcal{S}} \p_i + \cap_{j \in \mathcal{T}} \p_j) \) \geq m$ for some $1 \leq m \leq d -2$.
Notice that this means $\sqrt{\p_i + \p_j} \neq \n$ for some $i \in \mathcal{S}$ and $j \in \mathcal{T}$, else $\cc(A) = 0$.  

By Lemma \ref{minimalPrimes: L}, for each $1 \leq i \leq s$, there is at least one minimal prime of $\eltL x \eltR$ that contains $\p_i$, and every minimal prime of $\eltL x \eltR$ contains some $\p_i$. 
Let $Q_i$ denote the intersection of all minimal primes of $\eltL x \eltR$ that contain $\p_i$.
Since $x$ is in no minimal prime of any ideal in $\Xi(A)$, 
$\Ht_A(\p_i + \p_j + \eltL x \eltR) = \Ht_A(\p_i + \p_j) + 1$ by Krull's principal ideal theorem as long as $\p_i + \p_j$ is not $\n$-primary, and
\begin{align*}
\dim\(A/ (\cap_{i \in \mathcal{S}} \p_i + \cap_{j \in \mathcal{T}} \p_j) \) 
&= \dim\(A/ (\cap_{i \in \mathcal{S}} \cap_{j \in \mathcal{T}} (\p_i +  \p_j ) ) \)  \\
&= \dim\(A/ (\cap_{i \in \mathcal{S}} \cap_{j \in \mathcal{T}} (\p_i +  \p_j + \eltL x \eltR) ) \)  + 1 \\
&=  \dim\(A/ (\cap_{i \in \mathcal{S}} (\p_i + \eltL x \eltR ) + \cap_{j \in \mathcal{T}} ( \p_j + \eltL x \eltR ) ) \)  + 1 \\
&\geq  \dim\(A/ (\cap_{i \in \mathcal{S}} Q_i + \cap_{j \in \mathcal{T}}  Q_j \)  + 1 \\
&\geq \cc(A/\eltL x \eltR ) + 1.
\end{align*}
We conclude that $\cc(A/\eltL x \eltR) \leq \cc(A) - 1$. 
\end{proof}

Propositions \ref{graphModuloNZD: P} and \ref{graphModuloNZD2: P} allow us to conclude that the graph associated to a ring is connected if and only those associated to the ring modulo certain elements are connected. To wit:

\begin{theorem} \label{connectedModNZD: T}
Let $(A, \n)$ be an equidimensional complete local ring containing a field of dimension $d \geq 3$, with a separably closed residue field.  
Fix $x \in \n$  that is in no minimal prime of any ideal in  $\Xi(A)$ (see Definition \ref{primeSet: D}).
Then for any $1 \leq t \leq d-2$,  $\graph_t(A)$ is connected if and only if $\graph_t(A/\eltL x \eltR)$ is connected.  
\end{theorem}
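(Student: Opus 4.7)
The plan is to obtain this theorem as an immediate consequence of the two preceding propositions, namely Proposition~\ref{graphModuloNZD: P} and Proposition~\ref{graphModuloNZD2: P}. The essential point is to check that the hypothesis of the theorem, namely that $x$ lies in no minimal prime of any ideal in $\Xi(A)$, is strong enough to feed into both propositions simultaneously.

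First I would observe that since $\dim(A) \geq 3 > 0$, every minimal prime $\p$ of $A$ belongs to $\Xi(A)$ by Definition~\ref{primeSet: D} (take $\q = \p$, noting $\sqrt{\p + \p} = \p \neq \n$ since $\dim(A/\p) = d \geq 3$). Consequently, any $x$ that avoids every minimal prime of every ideal in $\Xi(A)$ in particular avoids every minimal prime of $A$. This verifies the hypothesis of Proposition~\ref{graphModuloNZD: P}, which delivers the forward implication: if $\graph_t(A)$ is connected, so is $\graph_t(A/\eltL x \eltR)$.

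For the reverse implication, the hypothesis on $x$ is exactly the assumption of Proposition~\ref{graphModuloNZD2: P}, so if $\graph_t(A/\eltL x \eltR)$ is connected, then $\graph_t(A)$ is connected. Combining these two directions gives the biconditional for every $1 \leq t \leq d-2$, and the theorem follows.

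Since the theorem is a direct packaging of the two previous propositions, there is essentially no new obstacle; the only subtlety worth spelling out is the verification that the Definition~\ref{primeSet: D} hypothesis implies the weaker ``$x$ in no minimal prime of $A$'' condition needed by Proposition~\ref{graphModuloNZD: P}, which is handled by the observation above about $\p + \p \in \Xi(A)$.
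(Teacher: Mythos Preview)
Your proposal is correct and matches the paper's approach exactly: the paper's proof consists of the single sentence ``This is an immediate consequence of Propositions~\ref{graphModuloNZD: P} and~\ref{graphModuloNZD2: P}.'' The detail you spell out---that $x$ avoiding minimal primes of ideals in $\Xi(A)$ forces $x$ to avoid the minimal primes of $A$---is already recorded in the paper immediately after Definition~\ref{primeSet: D}, so no new content is needed.
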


\begin{proof}
This is an immediate consequence of Propositions \ref{graphModuloNZD: P} and \ref{graphModuloNZD2: P}.  
\end{proof}

\transition{
Finally, we extend Theorem \ref{connectedModNZD: T} to address the setting in which the graphs need not be connected.  
}

\begin{theorem} \label{CCsNZD: T}
Let $(A, \n)$ be an equidimensional complete local ring containing a field of dimension $d \geq 3$, with a separably closed residue field.  
Fix $x \in \n$  that is in no minimal prime of any ideal in  $\Xi(A)$ (see Definition \ref{primeSet: D}).
Then for every $1 \leq t \leq d-2$, 
$
\#\graph_t(A) = \#\graph_t(A/\eltL x \eltR).
$
\end{theorem}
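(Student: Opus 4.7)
The plan is to prove $\#\graph_t(A) = \#\graph_t(A/\eltL x \eltR)$ by decomposing the graph into its connected components and applying Theorem \ref{connectedModNZD: T} to each separately. Set $s = \#\graph_t(A)$ and let $\Sigma_1, \ldots, \Sigma_s$ be the connected components of $\graph_t(A)$. For each $i$, let $\b_i$ denote the intersection of the minimal primes of $A$ that index the vertices of $\Sigma_i$, so that $A/\b_i$ is equidimensional complete local of dimension $d$ with separably closed residue field, and $\graph_t(A/\b_i) \cong \Sigma_i$ is connected. Every ideal in $\Xi(A/\b_i)$ pulls back to an ideal in $\Xi(A)$, so $x$ meets the hypothesis of Theorem \ref{connectedModNZD: T} for $A/\b_i$, forcing each $\graph_t(A/(\b_i + \eltL x \eltR))$ to be connected.

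Next I would embed these $s$ connected graphs inside $\graph_t(A/\eltL x \eltR)$. By Lemma \ref{minimalPrimes: L} applied to $A/\b_i$, every minimal prime of $\b_i + \eltL x \eltR$ is a minimal prime of $\eltL x \eltR$ in $A$; and because $A/\eltL x \eltR$ and $A/(\b_i + \eltL x \eltR)$ are both equidimensional of dimension $d-1$, the height of the sum of two such primes agrees in either quotient. Thus each $\graph_t(A/(\b_i + \eltL x \eltR))$ is a connected subgraph of $\graph_t(A/\eltL x \eltR)$. Using Lemma \ref{minimalPrimes: L} once more, every vertex of $\graph_t(A/\eltL x \eltR)$ lies above a minimal prime of $A$, and hence in some $\Sigma_i$, so the $s$ subgraphs cover the vertex set. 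This already yields $\#\graph_t(A/\eltL x \eltR) \leq s$.

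The main obstacle is the reverse inequality: showing that no edge in $\graph_t(A/\eltL x \eltR)$ can join a vertex from $\Sigma_i$'s subgraph to one from $\Sigma_{i'}$'s for $i \neq i'$. Suppose $Q \supseteq \p_j \in \Sigma_i$ and $Q' \supseteq \p_{j'} \in \Sigma_{i'}$ are minimal primes of $\eltL x \eltR$ joined by an edge, equivalently $\Ht_A(Q + Q') \leq t + 1$ by equidimensionality of $A/\eltL x \eltR$. Since $\p_j$ and $\p_{j'}$ lie in distinct components of $\graph_t(A)$, $\Ht_A(\p_j + \p_{j'}) \geq t + 1$. If $\sqrt{\p_j + \p_{j'}} = \n$, then $\dim(A/(Q + Q')) \leq \dim(A/(\p_j + \p_{j'})) = 0$ contradicts the edge-forced bound $\dim(A/(Q + Q')) \geq d - 1 - t \geq 1$ (using $t \leq d - 2$). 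Otherwise $\p_j + \p_{j'} \in \Xi(A)$, so $x$ avoids all its minimal primes, and Krull's principal ideal theorem (as exploited in Proposition \ref{graphModuloNZD2: P}) upgrades the bound to $\Ht_A(\p_j + \p_{j'} + \eltL x \eltR) = \Ht_A(\p_j + \p_{j'}) + 1 \geq t + 2$, contradicting $\Ht_A(Q + Q') \geq \Ht_A(\p_j + \p_{j'} + \eltL x \eltR)$ together with $\Ht_A(Q + Q') \leq t + 1$. With no cross-edges, the $s$ connected subgraphs occupy distinct components of $\graph_t(A/\eltL x \eltR)$, completing the equality.
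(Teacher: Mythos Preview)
Your proof is correct and follows the same overall architecture as the paper's: decompose $\graph_t(A)$ into connected components $\Sigma_i$, show each $\graph_t\bigl(A/(\b_i + \eltL x \eltR)\bigr)$ remains connected, embed these as subgraphs of $\graph_t(A/\eltL x \eltR)$ covering all vertices, and rule out cross-edges. The substantive difference is in the cross-edge step. The paper argues that a cross-edge would make $\graph_t\bigl(A/(\b_i \cap \b_{i'} + \eltL x \eltR)\bigr)$ connected and then invokes Proposition~\ref{graphModuloNZD2: P} to pull connectivity back to $\graph_t(A/\b_i \cap \b_{i'})$, a contradiction. You instead compute heights directly: $Q + Q' \supseteq \p_j + \p_{j'} + \eltL x \eltR$, and the $\Xi(A)$-avoidance hypothesis plus Krull's theorem force $\Ht_A(\p_j + \p_{j'} + \eltL x \eltR) \geq t+2$. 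Your route is more elementary and bypasses the connectedness-dimension machinery behind Proposition~\ref{graphModuloNZD2: P}; the paper's is shorter because it recycles that proposition wholesale. One small point: your final sentence needs the $s$ subgraphs to be vertex-disjoint, not merely free of cross-edges; but this follows immediately from your same argument with $Q = Q'$ (or from $\Ht_A(Q) = 1 < t+1 \leq \Ht_A(\b_i + \b_{i'})$).
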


\begin{proof}
First notice that by Lemma \ref{minimalPrimes: L}, each minimal prime of $\eltL x \eltR$ is a minimal prime of $\p + \eltL x \eltR$, for some minimal prime $\p$ of $A$.
Fixing $1 \leq t \leq d-2$, this means that the vertices of $\graph_t(A/\eltL x \eltR)$ can be indexed by the ideals of the form $\q \, A/\eltL x \eltR$, where $\q$ is a minimal prime of $\p + \eltL x \eltR$, 
 for some minimal prime $\p$ of $A$.  

Suppose that $\# \graph_t(A) = \conncomp$, and let $\Sigma_1, \ldots, \Sigma_{\conncomp}$ denote the connected components of $\graph_t(A)$.
For $1 \leq i \leq \conncomp$, let $\a_i$ be the ideal of $A$ that is the intersection of the minimal primes of $A$ that correspond to the vertices of $\Sigma_i$.  
Then each $\Sigma_i$ is isomorphic to $\graph_t(A/\a_i)$, so by Proposition \ref{graphModuloNZD: P}, each $\graph_t(A/(\a_i + \eltL x \eltR))$ is also connected.  

By Lemma \ref{minimalPrimes: L},  $\graph_t( A/ \eltL x \eltR )$ is isomorphic to the graph $\graph_t\( A/ \cap_{i=1}^{\conncomp} (\a_i + \eltL x \eltR ) \)$, and if $i \neq j$, then no minimal prime of $\a_i + \eltL x \eltR$ is also a minimal prime of $\a_i + \eltL x \eltR$.  

Hence, to finish the proof, it suffices to show that 
there is no edge between vertices corresponding to any minimal prime of $\a_i + \eltL x \eltR$ and any minimal prime of $\a_j + \eltL x \eltR$ in this graph, for $1 \leq i < j \leq \conncomp$.  
However, if there were an edge, then $\graph_t\( A/ (\a_i \cap \a_j + \eltL x \eltR )\)$ would be connected, which would imply that 
$\graph_t\(A/ \a_i \cap \a_j \)$ is connected by Proposition \ref{graphModuloNZD2: P}, contradicting the fact that $\Sigma_i$ and $\Sigma_j$ are distinct connected components of $\graph_t(A)$.

We conclude that given $1 \leq i \leq \conncomp$, the induced subgraph of $\graph_t( A/ \eltL x \eltR )$ whose vertices 
correspond to minimal primes of $\p + \eltL x \eltR$ for $\p$ a fixed minimal prime of $A$ is a connected component of  $\graph_t( A/ \eltL x \eltR )$.
In particular, $\graph_t(A/\eltL x \eltR )$ has $\conncomp$ connected components.
\end{proof}


\section{Lyubeznik numbers modulo an element}  \label{LyuNZD: S}


Inspired by Zhang's proof that the highest Lyubeznik number counts the connected components of the Hochster-Huneke graph \cite{Zhang07}, we relate Lyubeznik numbers of a ring to those modulo a carefully chosen nonozerdivisor.  
Rather than comparing the highest Lyubeznik numbers, we compare those on the superdiagonal of the Lyubeznik table. 
The relationship can be more subtle for the Lyubeznik numbers we consider; i.e., we may not obtain an equality.   

Note the following observation on the support of local cohomology.  

\begin{lemma}[cf.\,{\cite[Lemma 2.4]{Zhang07}}]  
\label{nzdChoice: L}
Suppose that $(R, \m)$ is a regular local ring of dimension $n$, and let 
$I$ be an ideal of $R$ for which $R/I$ is equidimensional of dimension $d \geq 2$.
Fix an integer $1 \leq i \leq d- 1$,  and $r \in \m$ that satisfies the following property:  
\begin{quotation}
If $\Supp_R H^{n-i}_I(R) \neq \{ \m \}$, then 
$r$ is in no minimal prime of $\Supp_R H^{n-i}_I(R)$.  
\end{quotation}
\noindent Then $\dim \Supp_R H^0_{\eltL r \eltR} ( H^{n-i}_I(R) )\leq \max\{0,i-2\}$.
\end{lemma}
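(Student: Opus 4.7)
The plan is to bound $\Supp_R H^0_{\eltL r \eltR}(H^{n-i}_I(R))$ by combining the general inclusion
\[
\Supp_R H^0_{\eltL r \eltR}(M) \;\subseteq\; \V(r) \cap \Supp_R M,
\]
valid for any $R$-module $M$, with a sharp dimension estimate on $\Supp_R H^{n-i}_I(R)$ that uses the equidimensionality of $R/I$. The inclusion above holds because $H^0_{\eltL r \eltR}$ commutes with localization at primes and $r$ acts invertibly on $M_\p$ whenever $r \notin \p$, so $H^0_{\eltL r \eltR}(M)_\p = 0$ in that case.

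First I would dispose of the degenerate case: if $\Supp_R H^{n-i}_I(R) \subseteq \{\m\}$, the displayed inclusion forces $\Supp_R H^0_{\eltL r \eltR}(H^{n-i}_I(R)) \subseteq \{\m\}$, which has dimension at most $0 \leq \max\{0,i-2\}$, independent of $r$. In the remaining case, the hypothesis on $r$ ensures that $r$ lies outside every minimal prime of $\Supp_R H^{n-i}_I(R)$.

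The key step is to invoke the sharpened bound
\[
\dim \Supp_R H^{n-i}_I(R) \;\leq\; i - 1, \qquad 1 \leq i \leq d-1,
\]
stronger by one than the naive estimate $\leq i$ coming from Grothendieck vanishing. This refinement depends crucially on the equidimensionality of $R/I$: for any prime $\p \supseteq I$ with $\dim R/\p = i$, one checks that $R_\p/I R_\p$ inherits equidimensionality of dimension $d-i \geq 1$, and a localized form of the Hartshorne--Lichtenbaum (or second) vanishing theorem then forces $H^{n-i}_{I R_\p}(R_\p) = 0$, whence $\p \notin \Supp_R H^{n-i}_I(R)$. This improvement is the heart of the argument, and the step where I expect the main obstacle to lie; it is the technical input underlying the analogous \cite[Lemma 2.4]{Zhang07}, and care must be taken with completion issues when applying Hartshorne--Lichtenbaum to the localization.

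To conclude, since $r$ lies in no minimal prime $\p$ of $\Supp_R H^{n-i}_I(R)$, and $R/\p$ is a catenary domain, Krull's principal ideal theorem gives $\dim R/(\p + \eltL r \eltR) = \dim R/\p - 1$ for each such $\p$. Taking the maximum over minimal primes,
\[
\dim\left(\V(r) \cap \Supp_R H^{n-i}_I(R)\right) \;\leq\; \dim \Supp_R H^{n-i}_I(R) - 1 \;\leq\; i - 2,
\]
and combining with the initial inclusion yields $\dim \Supp_R H^0_{\eltL r \eltR}(H^{n-i}_I(R)) \leq i - 2 \leq \max\{0, i-2\}$, as desired.
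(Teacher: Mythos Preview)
Your proposal is correct and follows essentially the same route as the paper's own proof: both establish the key bound $\dim\Supp_R H^{n-i}_I(R)\leq i-1$ via Grothendieck vanishing for $\Ht_R(\p)<n-i$ and the Hartshorne--Lichtenbaum vanishing theorem for $\Ht_R(\p)=n-i$ (using equidimensionality of $R/I$ to ensure $IR_\p$ is not $\p R_\p$-primary), and then drop the dimension by one via Krull's principal ideal theorem since $r$ avoids the minimal primes of the support. Your framing through the inclusion $\Supp_R H^0_{\eltL r\eltR}(M)\subseteq\V(r)\cap\Supp_R M$ is slightly cleaner packaging, and your caution about completion in HLVT is harmless here because $R_\p$ is regular, so its completion is a regular local domain and the theorem applies directly.
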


\begin{proof}
The result holds when $H^0_{\eltL r \eltR} ( H^{n-i}_I(R) ) = 0$.  Otherwise, we claim
\begin{equation}  \label{dimSuppHn2: e}
\dim \Supp_R H^{n-i}_I(R) \leq i-1.
\end{equation}
To see this, fix $\p \in  \Supp_R H^{n-i}_I(R)$, so that $(H^{n-i}_I(R))_{\p} = H^{n-i}_{I R_\p}(R_\p) \neq 0$.  
By way of contradiction, assume that $\Ht_R(\p) \leq n - i$.
If $\Ht_R(\p) < n - i$, then $H^{n-i}_{I R_\p}(R_\p)$ vanishes since $n-i > \dim (R_{\p})$.  
On the other hand, if $\Ht_R(\p) = n - i$, then since $\Ht_R(I) = n - d \leq n-i-1$ and  $R/I$ is equidimensional, 
$I R_\p$ cannot be $\p R_\p$-primary.
Again, we have that $H^{n-i}_{I R_\p}(R_\p) = 0$, now by the HLVT \cite[Theorem 3.1]{Hartshorne68}.  
In either case, we have arrived at a contradiction, and \eqref{dimSuppHn2: e} holds.

We proceed to show that $\dim \Supp_R H^0_{\eltL r \eltR} ( H^{n-i}_I(R) )\leq i-2$.  
This is clear if $\dim \Supp_R  H^{n-i}_I(R) \leq i-2$, since $H^0_{\eltL r \eltR} ( H^{n-i}_I(R) )$ is a submodule of $H^{n-i}_I(R)$.  
Otherwise, fix some $\p \in \Supp_R H^0_{\eltL r \eltR} ( H^{n-i}_I(R) )$.  
Then $\p \in \Supp_R H^{n-i}_I(R)$, so that $\p$ contains some minimal element $\q$ of $\Supp_R H^{n-i}_I(R)$.  
Moreover, $r \in \p$, but $r \notin \q$ by our choice of $r$. Then, by Krull's principal ideal theorem and \eqref{dimSuppHn2: e} we have that
\begin{equation}
\Ht_R(\p) \geq \Ht_R(\q + \eltL r \eltR) = \Ht_R(\q) + 1 \geq n-i+2;
\end{equation}
i.e., $\dim(R/\p) \leq i-2$.
\end{proof}

Our next goal is to relate the Lyubeznik numbers on the superdiagonal of the Lyubeznik table to those on the superdiagonal for a quotient modulo an element. 
We can choose an element $r \in \m$ as specified in Propositions \ref{lambda12: P} and \ref{lambdaBound: P} by prime avoidance, since each local cohomology module $H^{i}_I(R)$ has finitely many associated primes \cite[Corollary 3.6]{Lyubeznik93}.

\begin{proposition}\label{lambda12: P}
Suppose that $(R, \m)$ is a $n$-dimensional complete regular local ring containing a field.  
Let $I$ be an ideal of $R$ for which $R/I$ is equidimensional of dimension $d \geq 3$. 
Moreover, fix $r \in \m$ satisfying the following property: 
\begin{quotation}
 If $\Supp_R H^{n-2}_I(R) \neq \{ \m \}$, then 
$r$ is in no minimal prime of $\Supp_R H^{n-2}_I(R)$. 
\end{quotation} 
Then $\lambda_{0,1}(R/I) + \lambda_{1,2}(R/I) = \lambda_{0,1}(R/ ( I + \eltL r \eltR) )$.  
In particular, if the residue field of $R$ is separably closed and $\Spec^\circ(R/I)$ is connected, then 
$\lambda_{1,2}(R/I)  =  \lambda_{0,1}(R/ ( I + \eltL r \eltR) )$.
\end{proposition}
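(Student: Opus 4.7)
The plan is to employ the Grothendieck spectral sequence
\[
E_2^{p,q} = H^p_{\eltL r \eltR}(H^q_I(R)) \Rightarrow H^{p+q}_{I + \eltL r \eltR}(R),
\]
arising from the factorization of the $(I + \eltL r \eltR)$-torsion functor through the $I$-torsion functor. Since $\eltL r \eltR$ is principal, $H^p_{\eltL r \eltR}$ vanishes for $p \geq 2$, so the $E_2$ page has two nonzero rows and the spectral sequence degenerates at $E_2$. At total degree $n - 1$ this yields the short exact sequence
\[
0 \to H^1_{\eltL r \eltR}\( H^{n-2}_I(R) \) \to H^{n-1}_{I + \eltL r \eltR}(R) \to H^0_{\eltL r \eltR}\( H^{n-1}_I(R) \) \to 0. \quad (\dagger)
\]

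Writing $M = H^{n-2}_I(R)$, I then verify that each of the following modules has support in $\{\m\}$: (i) $\Supp_R H^{n-1}_I(R) \subseteq \{\m\}$ from the dimension estimate inside the proof of Lemma \ref{nzdChoice: L} (applied with $i = 1$), so $H^0_{\eltL r \eltR}(H^{n-1}_I(R)) = H^{n-1}_I(R)$; (ii) $H^0_{\eltL r \eltR}(M) = H^0_\m(M)$ by Lemma \ref{nzdChoice: L}; (iii) $\Supp_R H^1_{\eltL r \eltR}(M) \subseteq \V(r) \cap \Supp_R M$, which together with $\dim \Supp_R M \leq 1$ and the choice of $r$ forces $\Supp_R H^1_{\eltL r \eltR}(M) \subseteq \{\m\}$; and (iv) $H^{n-1}_{I + \eltL r \eltR}(R)$ is then $\m$-torsion by $(\dagger)$. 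Invoking the Lyubeznik structure theorem---and its positive-characteristic analogue by Huneke--Sharp---each of these $\m$-torsion modules is identified with a direct sum of copies of $E_R(\k)$, and in particular is an injective $R$-module.

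Because $H^1_{\eltL r \eltR}(M)$ is injective, $(\dagger)$ splits. Applying $\Hom_R(\k, -)$ and taking $\k$-dimensions gives
\[
\lambda_{0,1}(R/(I + \eltL r \eltR)) = \dim_\k \Hom_R\( \k, H^1_{\eltL r \eltR}(M) \) + \lambda_{0,1}(R/I).
\]
It remains to identify $\dim_\k \Hom_R(\k, H^1_{\eltL r \eltR}(M))$ with $\lambda_{1,2}(R/I) = \dim_\k \Ext^1_R(\k, M)$. For this I use the short exact sequences $0 \to H^0_\m(M) \to M \to M/H^0_\m(M) \to 0$ and $0 \to M/H^0_\m(M) \to M[r^{-1}] \to H^1_{\eltL r \eltR}(M) \to 0$. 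Injectivity of $H^0_\m(M) = H^0_{\eltL r \eltR}(M)$ forces $\Ext^i_R(\k, H^0_\m(M)) = 0$ for $i \geq 1$, so the first yields $\Ext^1_R(\k, M) \cong \Ext^1_R(\k, M/H^0_\m(M))$; the vanishing $\Ext^\bullet_R(\k, M[r^{-1}]) = 0$---since localization at $r \in \m$ annihilates any $\k$-module Ext---applied to the second gives $\Ext^1_R(\k, M/H^0_\m(M)) \cong \Hom_R(\k, H^1_{\eltL r \eltR}(M))$, completing the identification.

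The ``in particular'' statement is then immediate: with separably closed residue field, the SVT equates connectedness of $\Spec^\circ(R/I)$ with $H^{n-1}_I(R) = 0$, hence with $\lambda_{0,1}(R/I) = 0$, and substitution into the displayed identity reduces it to $\lambda_{1,2}(R/I) = \lambda_{0,1}(R/(I + \eltL r \eltR))$. The main obstacle is the injectivity input: showing every $\m$-torsion local cohomology module appearing above is of the form $E_R(\k)^s$. This rests on the $D$-module structure of local cohomology of regular rings in characteristic zero (Lyubeznik) or the $F$-module structure in positive characteristic (Huneke--Sharp), together with the fact that, in either category, the modules supported at $\{\m\}$ are exactly the direct sums of $E_R(\k)$.
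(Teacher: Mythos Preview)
Your argument is correct and follows essentially the same route as the paper's proof.  Both proofs rest on the exact sequence relating $H^{\bullet}_I(R)$, $H^{\bullet}_I(R)_r$, and $H^{\bullet}_{I+\eltL r \eltR}(R)$ (your two-row spectral sequence is exactly the paper's long exact sequence \cite[8.1.2]{BrodmannSharpEd2} repackaged), together with the support estimates of Lemma~\ref{nzdChoice: L} and the fact that $\m$-torsion iterated local cohomology modules over $R$ are finite direct sums of $E_R(\k)$.

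The only organizational difference is in how the Lyubeznik numbers are read off.  The paper applies $H^{\bullet}_\m(-)$ to four short exact sequences extracted from the long exact sequence, using along the way the vanishing $H^2_\m(H^{n-2}_I(R))=0$ (a separate consequence of equidimensionality), and arrives at a short exact sequence of modules of the form $E^{\oplus\lambda}$.  You instead observe directly that all three terms of $(\dagger)$ are $\m$-torsion, split $(\dagger)$ using injectivity, and then identify $\dim_\k\Hom_R(\k,H^1_{\eltL r\eltR}(M))$ with $\dim_\k\Ext^1_R(\k,M)$ via the two auxiliary short exact sequences.  Your version is slightly more streamlined in that it avoids the explicit invocation of $H^2_\m(H^{n-2}_I(R))=0$, but the underlying ideas are identical.
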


\begin{proof}
Let $J = I + \eltL r \eltR$ in $R$, and consider the long exact sequence \begin{equation} \label{LESsmallcase: e}
\cdots \to H^{n-2}_I(R) \to H^{n-2}_I(R)_r \to H^{n-1}_J(R) \to H^{n-1}_I(R) \to H^{n-1}_I(R)_r \to \cdots
\end{equation}
(see e.g., \cite[8.1.2]{BrodmannSharpEd2}).  
We obtain from \eqref{LESsmallcase: e} the following short exact sequences, for appropriate $R$-modules $K$, $M$, $N$, $P$, and $Q$:  
\begin{align} 
\label{SESeltOne: e} &0 \to K \to H^{n-2}_I(R) \to M \to 0 \\
\label{SESeltTwo: e} &0 \to M \to H^{n-2}_I(R)_r \to N \to 0  \\
\label{SESeltThree: e} &0 \to N \to H^{n-1}_J(R) \to P \to 0 \\ 
\label{SESeltFour: e} &0 \to P \to H^{n-1}_I(R) \to Q \to 0.
\end{align}

Notice that $K \cong H^0_{\eltL r \eltR}(H^{n-2}_I(R))$ and $P \cong  H^0_{\eltL r \eltR} (H^{n-1}_I(R))$. 
As the injective dimension of each of $K$ and $P$ is no more than the dimension of its support \cite[Theorem 3.4]{Lyubeznik93}, 
by Lemma \ref{nzdChoice: L}, each module is either zero or supported only at $\m$.
Hence, 
$H^j_\m( K ) = 0 \text{ for all } j  \geq 1$.  
Thus, the long exact sequence in local cohomology with support in $\m$ with respect to \eqref{SESeltOne: e} tells us that
$H^1_\m(M) \cong H^1_\m(H^{n-2}_I(R))$ and $H^2_\m(M) = H^2_\m(H^{n-2}_I(R))$, the latter of which vanishes since $R/I$ is equidimensional (see the proof of \cite[Property (4.4iii)]{Lyubeznik93}).

Moreover, as $r \in \m$, so that $H^j_\m( H^{n-2}_I(R)_r ) =0$ for all $j$, the long exact sequence with respect to \eqref{SESeltTwo: e} enables us to conclude that 
$H^1_\m(M)  \cong H^0_\m( N )$ and $H^2_\m(M)  \cong H^1_\m( N )$.
Consequently, 
\begin{equation} \label{conclusion1LES: e}
H^1_\m(H^{n-2}_I(R)) \cong H^0_\m( N )   
\end{equation}
and $H^1_\m(N) = 0$, the latter of which, combined with the long exact sequence with respect to \eqref{SESeltThree: e}, exhibits the following short exact sequence:
\begin{equation} \label{SES12: e}
0 \to H^0_\m(N) \to H^0_\m( H^{n-1}_J(R) ) \to H^0_\m(P) \to 0.
\end{equation}

As it is a submodule of $H^0_\m(H^{n-1}_I(R)_r) = 0$, $H^0_\m( Q) = 0$.  This fact, combined with the long exact sequence with respect to \eqref{SESeltFour: e}, shows that  $H^0_\m(P) \cong H^0_\m( H^{n-1}_I(R) )$.
This, \eqref{conclusion1LES: e}, and \eqref{SES12: e}, ensure the existence of a short exact sequence of the form 
\[
0 \to H^1_\m(H^{n-2}_I(R)) \to H^0_\m(H^{n-1}_J(R)) \to H^0_\m(H^{n-1}_I(R)) \to 0.
\]
If $E$ denotes the injective hull of the residue field of $R$ and $u =  \lambda_{1,2}(R/I)$, $v = \lambda_{0,1}(R/J)$, and $w =  \lambda_{0,1}(R/I)$, we have that $H^1_\m(H^{n-2}_I(R)) \cong E^{\oplus u}$, $H^0_\m(H^{n-1}_J(R)) \cong E^{\oplus v}$, and $H^0_\m(H^{n-1}_I(R)) \cong E^{\oplus w}$ \cite[Lemma 2.2]{LyuInvariants}. Then, 
the result follows.  
\end{proof}

For the Lyubeznik numbers on the superdiagonal of the Lyubeznik table not addressed by Proposition \ref{lambda12: P}, our analogous result is a bound.

\begin{proposition} \label{lambdaBound: P}
Suppose that $(R, \m)$ is an $n$-dimensional complete regular local ring containing a field.
Let $I$ be an ideal of $R$ for which 
$R/I$ is equidimensional of dimension $d \geq 3$. 
Given $2 \leq i \leq d - 2$, fix $r \in \m$ that satisfies the following property:
\begin{quotation} For $j = i$ and $j = i + 1$, if 
 $\Supp_R H^{n-j}_I(R) \neq \{ \m \}$, then $r$ is in no minimal prime of $\Supp_R H^{n-j}_I(R)$.
\end{quotation}
\noindent Then $\lambda_{i,i+1}(R/I) \geq \lambda_{i-1,i}(R/ ( I + \eltL r \eltR) )$. 
\end{proposition}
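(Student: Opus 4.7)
The plan is to mirror the argument of Proposition \ref{lambda12: P}, but with all cohomological indices shifted upward by $i-1$. The crucial new feature is that the weaker support bounds on the two key torsion submodules below produce only a surjection (rather than an isomorphism) at one step, which is what converts the equality of Proposition \ref{lambda12: P} into an inequality here.

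First, I would set $J = I + \eltL r \eltR$ and split the long exact sequence
\[
\cdots \to H^{n-i-1}_I(R) \to H^{n-i-1}_I(R)_r \to H^{n-i}_J(R) \to H^{n-i}_I(R) \to H^{n-i}_I(R)_r \to \cdots
\]
into four short exact sequences with modules $K, M, N, P, Q$ defined exactly as in the proof of Proposition \ref{lambda12: P}; in particular, $K \cong H^0_{\eltL r \eltR}(H^{n-i-1}_I(R))$ and $P \cong H^0_{\eltL r \eltR}(H^{n-i}_I(R))$.

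Next, applying Lemma \ref{nzdChoice: L} with its index equal to $i+1$ and $i$ respectively gives $\dim \Supp_R K \leq i-1$ and $\dim \Supp_R P \leq i-2$. By \cite[Theorem 3.4]{Lyubeznik93}, therefore $\InjDim_R K \leq i-1$ and $\InjDim_R P \leq i-2$, so
\[
H^j_\m(K) = 0 \text{ for } j \geq i, \qquad H^j_\m(P) = 0 \text{ for } j \geq i-1.
\]
Since $r \in \m$ also forces $H^j_\m(H^{n-i-1}_I(R)_r) = H^j_\m(H^{n-i}_I(R)_r) = 0$ for all $j$, running the long exact sequences in $H^\bullet_\m(-)$ attached to the four short exact sequences yields the isomorphisms
\[
H^i_\m(H^{n-i-1}_I(R)) \cong H^i_\m(M) \cong H^{i-1}_\m(N),
\]
together with a surjection $H^{i-1}_\m(N) \twoheadrightarrow H^{i-1}_\m(H^{n-i}_J(R))$ arising from the vanishing of $H^{i-1}_\m(P)$. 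Composing produces a surjection
\[
H^i_\m(H^{n-i-1}_I(R)) \twoheadrightarrow H^{i-1}_\m(H^{n-i}_J(R)).
\]

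Finally, by \cite[Lemma 2.2]{LyuInvariants}, both sides are finite direct sums of copies of the injective hull $E$ of the residue field, with $\lambda_{i,i+1}(R/I)$ and $\lambda_{i-1,i}(R/J)$ summands respectively. Matlis duality (valid because $R$ is complete) converts the surjection into an injection of finitely generated free $R$-modules $R^{\lambda_{i-1,i}(R/J)} \hookrightarrow R^{\lambda_{i,i+1}(R/I)}$, forcing the inequality $\lambda_{i,i+1}(R/I) \geq \lambda_{i-1,i}(R/J)$. The main subtlety, and the reason that equality fails in general, is that $K$ and $P$ here may have positive-dimensional support, so the injective dimension bounds do not collapse to zero as they do in the proof of Proposition \ref{lambda12: P}; this is precisely what obstructs promoting the surjection above to an isomorphism.
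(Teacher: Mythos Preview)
Your proposal is correct and follows essentially the same argument as the paper's proof: the same long exact sequence, the same identification of $K$ and $P$ as $r$-torsion submodules, the same application of Lemma \ref{nzdChoice: L} and \cite[Theorem 3.4]{Lyubeznik93} to bound their injective dimensions, the same chain of isomorphisms $H^i_\m(H^{n-i-1}_I(R)) \cong H^i_\m(M) \cong H^{i-1}_\m(N)$ and surjection onto $H^{i-1}_\m(H^{n-i}_J(R))$, and the same Matlis-duality conclusion. The only cosmetic difference is that the paper records three short exact sequences rather than four (your module $Q$ and its sequence play no role), and the paper finishes the rank comparison by tensoring with the fraction field of $R$.
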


\begin{proof}
Let $\dim(R) = n$ and $J = I + \eltL r \eltR$, and consider the long exact sequence
\[
\cdots \to  H^{n-i-1}_I(R) \to H^{n-i-1}_I(R)_r \to H^{n-i}_J(R) \to H^{n-i}_I(R) \to H^{n-i}_I(R)_r \to \cdots. 
\]
Labeling kernels and images appropriately, we obtain the following short exact sequences, for $R$-modules $K,M,N$ and $P$:
\begin{align}
\label{EqA} 0 \to K \to &H^{n-i-1}_I(R) \to M \to 0 \\
\label{EqB} 0 \to M \to &H^{n-i-1}_I(R)_r \to N \to 0 \\
\label{EqC} 0 \to N \to &H^{n-i}_J(R) \to P \to 0.
\end{align}

As $K = H^0_{\eltL r \eltR} ( H^{n-i-1}_I(R))$
and $P=H^0_{\eltL r \eltR} ( H^{n-i}_I(R))$,  
\begin{equation} \label{supportRestrictions: e}
\dim \Supp_R (K) \leq i-1 \ \text{ and } \ \dim \Supp_R(P) \leq i-2
\end{equation}
by our choice of $r$ and Proposition \ref{nzdChoice: L}.
From \eqref{EqA}, we obtain the following long exact sequence in local cohomology with respect to $\m$.
\[
\cdots \to  H^i_\m(K) \to H^i_\m ( H^{n-i-1}_I(R)) \to H^i_\m (M) \to H^{i+1}_\m(K)  \to  \cdots. 
\]
Since $\InjDim(K) \leq \dim(K)\leq i-1$  by \eqref{supportRestrictions: e},  $H^j_\m(K)=0$ for $j\geq i$, so that, $H^i_\m( H^{n-i-1}_I(R))\cong H^i_\m(M).$
The short exact sequence \eqref{EqB} yields 
\[
\cdots \to H^j_\m ( H^{n-i-1}_I(R)_r ) \to H^j_\m (N) \to H^{j+1}_\m(M) \to H^{j+1}_\m ( H^{n-i-1}_I(R)_r ) \to  \cdots. 
\]
Since $r \in \m$,  $ H^j_\m ( H^{n-i-1}_I(R))_r  = 0$ for all integers $j$, hence $H^j_\m(N) \cong H^{j+1}_\m(M)$.  
From \eqref{EqC}, we obtain a long exact sequence of the from
\[
\cdots \to H^{i-2}_\m (P) \to H^{i-1}_\m(N) \to H^{i-1}_\m ( H^{n-i}_J(R) ) \to H^{i-1}_\m (P)  \to \cdots. 
\]
Since   $\InjDim(P) \leq \dim(P) \leq i-2,$ we have that $H^j_\m(P)=0$ for $j\geq i-1$.
Thus, the map from $H^{i-1}_\m(N)$ to $H^{i-1}_\m( H^{n-i}_J(R))$ is surjective.

Now, as
$H^{i-1}_\m(N) \cong H^i_\m(M) \cong H^i_\m( H^{n-i-1}_I(R))$, there exists a surjection 
\[H^i_\m( H^{n-i-1}_I(R)) \onto H^{i-1}_\m( H^{n-i}_J(R)).\]
We have that $H^i_\m(H^{n-i-1}_I(R)) \cong E^{\oplus u}$ and $H^{i-1}_\m(H^{n-i}_J(R)) \cong E^{\oplus v}$ \cite[Lemma 2.2]{LyuInvariants},
where $E$ denotes the injective hull of the residue field of $R$, $u = \lambda_{i,i+1}(R/I)$, and $v = \lambda_{i-1,i}(R/J)$.  
By applying Matlis duals $\Hom_R( -, E)$ to the surjection 
$E^{\oplus u} \onto  E^{\oplus v}$, 
we obtain an injection 
$R^{v} \hookrightarrow  R^{u}$
since $\Hom_R(E, E) \cong R$. 
Tensoring with the fraction field of $R$ and taking vector space dimensions, we conclude that 
$\lambda_{i-1,i}(R/J) = v \leq u = \lambda_{i,i+1}(R/I)$.
\end{proof}


\section{The graphs $\graph_\bullet$ and Lyubeznik numbers} \label{graphsLyu: S}


Recall that in Section \ref{graphNZD: S}, we compare the graphs associated to the ring to those associated to certain quotients modulo an element, and 
in Section \ref{LyuNZD: S}, we do the same for Lyubeznik numbers.
Using the results obtained therein, we can directly relate the Lyubeznik numbers and the graphs to one another.

\begin{lemma} \label{thetaD2Connected: L}
Let $A$ be an equidimensional complete local ring containing a field  of dimension $d$, with a separably closed residue field.  
If $d \leq 2$, then $\lambda_{1,2}(A) = 0$.
If $d \geq 3$ and $\graph_{d-2}(A)$ is connected, 
then $\lambda_{0,1}(A) =  \lambda_{1,2}(A) = 0$.  
\end{lemma}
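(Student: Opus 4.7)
The plan is to treat small dimensions directly and then apply the machinery of Sections~\ref{graphNZD: S} and~\ref{LyuNZD: S} for $d \geq 3$.

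For $d \leq 1$, equidimensionality forces $\Ht_R(I) = n - d \geq n - 1 > n - 2$, so Grothendieck's vanishing theorem gives $H^{n-2}_I(R) = 0$, whence $\lambda_{1,2}(A) = 0$ trivially. For $d = 2$, I would invoke the Grothendieck spectral sequence
\[
E_2^{p,q} = H^p_\m(H^q_I(R)) \Rightarrow H^{p+q}_\m(R),
\]
whose abutment vanishes in total degree $n - 1$. Equidimensionality yields $\Ht_R(I) = n - 2$, so that $H^q_I(R) = 0$ for $q < n - 2$, and trivially $H^q_I(R) = 0$ for $q \geq n$. Consequently, in total degree $n - 1$ only the terms $E_2^{0, n-1}$ and $E_2^{1, n-2} = H^1_\m(H^{n-2}_I(R))$ can be nonzero, and a direct check on the page $r \geq 2$ differentials $d_r \colon E_r^{p,q} \to E_r^{p+r, q-r+1}$ shows that both potential sources and targets of any differential touching $E_r^{1, n-2}$ vanish, so $E_\infty^{1, n-2} = E_2^{1, n-2}$. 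Convergence then forces $H^1_\m(H^{n-2}_I(R)) = 0$, hence $\lambda_{1,2}(A) = 0$ via \cite[Lemma~2.2]{LyuInvariants}.

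For $d \geq 3$ with $\graph_{d-2}(A)$ connected, Proposition~\ref{graphConn: P} yields $\cc(A) \geq 2$, so $\Spec^\circ(A)$ is connected and the SVT gives $\lambda_{0,1}(A) = 0$. To derive $\lambda_{1,2}(A) = 0$, I would use prime avoidance to select $r \in \m$ lying in no minimal prime of any ideal in $\Xi(A)$ and (whenever $\Supp_R H^{n-2}_I(R) \neq \{\m\}$) in no minimal prime of $\Supp_R H^{n-2}_I(R)$. Because every minimal prime of $A$ lies in $\Xi(A)$, such $r$ is automatically a nonzerodivisor on $A$; the set to avoid is finite by \cite[Corollary~3.6]{Lyubeznik93}, and since $\Supp_R H^{n-2}_I(R)$ being strictly larger than $\{\m\}$ prevents $\m$ itself from being a minimal prime of that support, $r$ exists.

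With this $r$, Proposition~\ref{graphModuloNZD: P} (applied with $t = d - 2$) shows that $\graph_{d-2}(A/\eltL r \eltR)$ is connected. Since $A/\eltL r \eltR$ is equidimensional of dimension $d - 1 \geq 2$ and $d - 2 = (d-1) - 1$, Proposition~\ref{graphConn: P} yields $\cc(A/\eltL r \eltR) \geq 1$, and the SVT then gives $\lambda_{0,1}(A/\eltL r \eltR) = 0$. Proposition~\ref{lambda12: P}, whose hypothesis of connected punctured spectrum has just been verified for $A$ and whose separably closed residue field hypothesis is in force, provides the equality $\lambda_{1,2}(A) = \lambda_{0,1}(A/\eltL r \eltR) = 0$. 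The main obstacle is the $d = 2$ base case, which lies outside the range of Propositions~\ref{graphModuloNZD: P} and~\ref{lambda12: P} and thus requires the separate spectral-sequence argument; once that is in hand, the $d \geq 3$ case is a clean forward chain through the preceding propositions with a single compatibly chosen $r$.
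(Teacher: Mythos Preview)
Your argument is correct and, for $d \geq 3$, follows the paper's route exactly: choose $r$ appropriately, pass to $A/\eltL r\eltR$ via Proposition~\ref{graphModuloNZD: P}, apply the SVT there, and conclude with Proposition~\ref{lambda12: P}. For $d \leq 2$ the paper simply cites \cite[Section~1 and Proposition~2.2]{Walther01}, whereas you supply a self-contained spectral-sequence argument, which is a nice bonus.

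Two minor remarks. First, for $d \leq 1$ the vanishing $H^{n-2}_I(R)=0$ holds because $n-2<\Ht_R(I)$ and $R$ is Cohen--Macaulay (vanishing below the grade); calling this ``Grothendieck's vanishing theorem'' is a misnomer, since that result bounds cohomological degree from above by $\dim R$. Second, imposing the full $\Xi(A)$ condition on $r$ is harmless but unnecessary: only the forward direction Proposition~\ref{graphModuloNZD: P} is invoked, and it merely requires the image of $r$ to avoid the minimal primes of $A$. The paper accordingly asks only that $r$ be regular on $A$ together with the $\Supp_R H^{n-2}_I(R)$ condition needed for Proposition~\ref{lambda12: P}.
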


\begin{proof}
It is known that if $d \leq 2$, then  $\lambda_{1,2}(A) = 0$ \cite[Section 1 and Proposition 2.2]{Walther01}.
Thus, let $d \geq 3$ and assume that $\graph_{d-2}(A)$ is connected.  Then $\graph_{d-1}(A)$ is connected as well; i.e., $\Spec^\circ(A)$ is connected and  $\lambda_{0,1}(A) = 0$.
Write $A \cong R/I$, where $I$ is an ideal of an $n$-dimensional regular local ring $(R, \m)$ containing a field.   
Fix a regular element $r \in \m$ satisfying the following property:  If $\Supp_R H^{n-2}_I(R) \neq \{ \m \}$, then 
$r$ is in no minimal prime of $\Supp_R H^{n-2}_I(R)$.  
Such an $r$ exists by prime avoidance since the $H^{n-2}_I(R)$ has finitely many associated primes \cite[Corollary 3.6]{Lyubeznik93}. 
Let $x$ denote the image of $r$ in $A \cong R/I$.
By Proposition \ref{graphModuloNZD: P}, $\graph_{d-2}(A/\eltL x \eltR)$ is connected, so that $\lambda_{0,1}(A/\eltL x \eltR) = 0$ by the second vanishing theorem of local cohomology
\cite{Ogus73}, \cite{PeskineSzpiro73}.
Thus, $\lambda_{1,2}(A) = 0$ by Proposition \ref{lambda12: P}.
\end{proof}

Recall that the Lyubeznik number $\lambda_{0,1}(A)$ of $A$ is one less than $\#\graph_{d-1}(A)  = \#\Spec^\circ(A)$ (see  \cite[Proposition 3.1]{Walther01} and the proof of \cite[Theorem 2.9]{HunekeLyubeznik90}).  
Motivated by this, the following lemma extends this relationship between Lyubeznik numbers and the graphs $\graph_t(A)$ associated to $A$, now comparing $\lambda_{1,2}(A)$ and $\graph_{d-2}(A)$.

\begin{proposition} \label{baseCaselambda12: P}
Let $A$ be an equidimensional complete  local ring containing a field of dimension $d \geq 3$, with a separably closed residue field. 
If $\graph_{d-1}(A)$ is connected, 
then 
$\lambda_{1,2}(A)=  \#\graph_{d-2}(A) - 1$.
\end{proposition}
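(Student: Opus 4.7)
The plan is to write $A \cong R/I$ with $(R, \m)$ an $n$-dimensional complete regular local ring, and then reduce to the base case $\lambda_{0,1}(A') = \#\graph_{\dim(A')-1}(A') - 1$ for $A' = A/\langle x \rangle$. The whole argument hinges on locating one element $r \in \m$ whose image $x \in \n$ simultaneously meets the hypotheses of Proposition \ref{lambda12: P} and Theorem \ref{CCsNZD: T}. First I would use prime avoidance: the avoidance set consists of (a) the (finitely many) minimal primes of $\Supp_R H^{n-2}_I(R)$ other than $\m$, which are finite since $H^{n-2}_I(R)$ has finitely many associated primes by \cite[Corollary 3.6]{Lyubeznik93}, together with (b) the (finitely many) minimal primes of the ideals in $\Xi(A)$, which are finite since $A$ has finitely many minimal primes. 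None of these avoidance primes equals $\n$, and condition (b) already forces $x$ to lie outside every minimal prime of $A$ (via $\q = \p$ in Definition \ref{primeSet: D}), so $A/\langle x \rangle$ is equidimensional of dimension $d-1$.

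With $x$ in hand, I would apply Proposition \ref{lambda12: P}. Since $\graph_{d-1}(A)$ is connected, $\cc(A) \geq 1$ by Proposition \ref{graphConn: P}, so $\Spec^\circ(A)$ is connected and $\lambda_{0,1}(A) = 0$ by the second vanishing theorem. The ``in particular'' part of Proposition \ref{lambda12: P} then gives
\[
\lambda_{1,2}(A) \;=\; \lambda_{0,1}\!\left(A/\langle x \rangle\right).
\]
Simultaneously, Theorem \ref{CCsNZD: T} applies to $\graph_{d-2}(A)$ (note $d-2 \leq d-2$), yielding
\[
\#\graph_{d-2}(A) \;=\; \#\graph_{d-2}\!\left(A/\langle x \rangle\right).
\]

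To finish, I would invoke the base-case identification of $\lambda_{0,1}$ with the top graph: the ring $A' = A/\langle x \rangle$ is equidimensional complete local of dimension $d' = d - 1 \geq 2$ with a separably closed residue field, and its ``top'' graph $\graph_{d'-1}(A') = \graph_{d-2}(A')$ satisfies
\[
\lambda_{0,1}(A') \;=\; \#\graph_{d-2}(A') - 1,
\]
as recorded for $\graph_{d-1}$ in Section \ref{spectraGraph: S} (see \cite[Proposition 3.1]{Walther01} and the proof of \cite[Theorem 2.9]{HunekeLyubeznik90}). Chaining the three identities gives
\[
\lambda_{1,2}(A) \;=\; \lambda_{0,1}(A/\langle x \rangle) \;=\; \#\graph_{d-2}(A/\langle x \rangle) - 1 \;=\; \#\graph_{d-2}(A) - 1,
\]
which is the claim.

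The main obstacle I anticipate is purely bookkeeping: verifying that the single element $x$ really can be chosen to satisfy all the hypotheses at once. The Lyubeznik-side condition is phrased in $R$ (on minimal primes of $\Supp_R H^{n-2}_I(R)$), while the graph-side condition is phrased in $A$ (on minimal primes of the ideals in $\Xi(A)$); both classes of primes contain $I$, so pulling everything back to $R$ and applying prime avoidance to a single finite union works, but it is the one step that genuinely fuses the two preparatory sections, so it needs to be stated precisely. Everything else reduces to the already-established results of Sections \ref{spectraGraph: S}--\ref{LyuNZD: S}.
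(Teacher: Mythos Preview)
Your proof is correct, and it takes a genuinely different route from the paper's. The paper proceeds by induction on $\#\graph_{d-2}(A)$: the base case is Lemma~\ref{thetaD2Connected: L}, and the inductive step peels off one connected component $\Sigma_\conncomp$ of $\graph_{d-2}(A)$ at a time via the Mayer-Vietoris sequence for the corresponding decomposition $I = \b_{\conncomp-1} \cap \a_\conncomp$. Your argument instead reduces in one step to the known $\lambda_{0,1}$ identity by passing to $A/\eltL x \eltR$, combining the equality in Proposition~\ref{lambda12: P} with the graph-preservation statement Theorem~\ref{CCsNZD: T}. The bookkeeping point you flag (choosing a single $r \in \m$ meeting both avoidance conditions, one phrased in $R$ and one in $A$) is handled exactly as you describe, since the primes in $\Xi(A)$ pull back to primes of $R$ containing $I$ and prime avoidance applies to the finite union in $R$.

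What each approach buys: the paper's Mayer-Vietoris induction uses only the one-direction Proposition~\ref{graphModuloNZD: P} from Section~\ref{graphNZD: S}, whereas your route needs the full strength of Theorem~\ref{CCsNZD: T} (hence also Proposition~\ref{graphModuloNZD2: P}). On the other hand, your argument is shorter and mirrors exactly the inductive mechanism later used for Proposition~\ref{graphConnectedGeneral: P}; in fact, dropping the connectedness hypothesis and using the full identity $\lambda_{0,1}(A) + \lambda_{1,2}(A) = \lambda_{0,1}(A/\eltL x \eltR)$ from Proposition~\ref{lambda12: P}, your method yields Theorem~\ref{MainTheorem: T}\ref{mainTheorem1: i} directly, bypassing the second Mayer-Vietoris induction the paper runs there.
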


\begin{proof}
The statement holds by Lemma \ref{thetaD2Connected: L} when $\graph_{d-2}(A)$ is connected.
Fix an integer $\conncomp \geq 2$, and by way of induction, assume that for all rings $B$ satisfying our hypotheses for which  $\#\graph_{d-2}(B) = \conncomp-1$, we have that 
$\lambda_{1,2}(B)=  \conncomp-2$.

Take a ring $A$ satisfying our hypotheses, for which $\# \graph_{d-2}(A) = \conncomp$.
Since $\graph_{d-1}(A)$ is connected and $\graph_{d-2}(A)$ is a subgraph with the same vertices, 
there exists an ordering $\Sigma_1, \Sigma_2, \ldots, \Sigma_{\conncomp}$ of the connected components of $\graph_{d-2}(A)$ 
for which the induced subgraph of $\graph_{d-1}(A)$ associated to  the union of the vertices of $\Sigma_1, \ldots, \Sigma_i$ is connected for all $1 \leq i \leq \conncomp$.

Write $A \cong R/I$, where $(R, \m)$ is an $n$-dimensional regular local ring containing a field, and $I$ is an ideal of $R$.  
For $1 \leq i \leq \conncomp$, let $\a_i$ be the ideal of $R$ that is the intersection of minimal primes of $I$ associated to the vertices of $\Sigma_i$, 
and let $\b_i = \a_1 \cap \cdots \cap \a_i$.
Since each $\graph_{d-1}(R/\a_i)$ and each $\graph_{d-1}(R/\b_i)$ is connected \cite[Theorem 2.9]{HunekeLyubeznik90} (see also its proof), 
\begin{equation} \label{HvanishingInduction: e}
H^{n-1}_{\a_i}(R) = H^{n-1}_{\b_i}(R) =0.
\end{equation}

For every minimal prime $\p$ of $\b_{\conncomp-1}$ and every minimal prime $\q$ of $\a_{\conncomp}$, since $\p A$ are $\q A$ are vertices of different connected components of $\graph_{d-2}(A)$, 
$\Ht_A(\p A + \q A) \geq d - 1$, so that $\Ht_R(\p + \q) \geq n - 1$. 
Hence $\Ht_R(\b_{\conncomp - 1} + \a_{\conncomp}) \geq n - 1$. 
Moreover, as $\Spec^\circ(A)$ is connected, $\sqrt{\b_{\conncomp-1} + \a_{\conncomp}} \neq \m$, so that 
\begin{equation} \label{heightJJ: e}
\Ht_R(\b_{\conncomp-1} + \a_{\conncomp}) = n-1.
\end{equation}
Therefore, since $R$ is regular, $H^{n-2}_{\b_{\conncomp-1}+\a_{\conncomp}}(R) = 0$.  
This fact, \eqref{HvanishingInduction: e}, and the Mayer-Vietoris sequence corresponding to $\b_{\conncomp-1}$ and $\a_{\conncomp}$, yield the short exact sequence
\begin{equation}  \label{SESinduction: e}
0 \to H^{n-2}_{\b_{\conncomp-1}}(R) \oplus H^{n-2}_{\a_{\conncomp}}(R) \to H^{n-2}_I(R) \to H^{n-1}_{\b_{\conncomp-1} + \a_{\conncomp}}(R) \to 0.
\end{equation}

Since $R/\b_{\conncomp-1}$ and $R/\a_{\conncomp}$ are equidimensional, $H^2_\m(H^{n-2}_{\b_{\conncomp-1}}(R)) = H^2_\m(H^{n-2}_{\a_{\conncomp}}(R)) = 0$ (see the proof of \cite[Property (4.4iii)]{Lyubeznik93}).
Moreover, by \eqref{heightJJ: e}, $R/(\b_{\conncomp-1} + \a_{\conncomp})$ is equidimensional of dimension one, 
so that $\lambda_{1,1}(R/(\b_{\conncomp-1} + \a_{\conncomp}))=1$ and $\lambda_{0,1}(R/(\b_{\conncomp-1} + \a_{\conncomp})) = 0$ (see, e.g.,  \cite[Section 1]{Walther01}).
In particular,  $H^0_\m(H^{n-1}_{\b_{\conncomp-1} + \a_{\conncomp}}(R)) = 0$  \cite[Lemma 1.4]{Lyubeznik93}.
Thus, the long exact sequence in local cohomology with support in $\m$ associated to \eqref{SESinduction: e} exhibits the short exact sequence 
\[
0 \to H^1_\m (H^{n-2}_{\b_{\conncomp-1}}(R)) \oplus H^1_\m(H^{n-2}_{\a_{\conncomp}}(R))  \to H^1_\m(H^{n-2}_I(R)) \to H^1_\m(H^{n-1}_{\b_{\conncomp-1}+ \a_{\conncomp}}(R)) \to 0.
\]
By Lyubeznik's work on $D$-modules \cite[Lemma 1.4]{Lyubeznik93}, the inductive hypothesis applied to $R/\b_{\conncomp-1}$, and Lemma \ref{thetaD2Connected: L}, 
\begin{align*}
\lambda_{1,2}(A) &= \lambda_{1,2}(R/\b_{\conncomp-1}) + \lambda_{1,2}(R/\a_{\conncomp}) + 1 = ( \conncomp - 2 ) + 0 + 1  = \conncomp- 1. 
\end{align*}

\vspace{-.7cm}

\end{proof}

\transition{
We can extend Lemma \ref{baseCaselambda12: P} to compare the other Lyubeznik numbers on the superdiagonal of the Lyubeznik table $\left[ \lambda_{i,j}(A) \right]_{0 \leq i, j \leq \dim(A)}$ with the number of 
connected components of a certain graph, but beyond $\lambda_{0,1}(A)$ and $\lambda_{1,2}(A)$, our equality is replaced with a bound.  
}

\begin{proposition}  \label{graphConnectedGeneral: P}
Let $A$ be an equidimensional complete local ring containing a field of dimension $d \geq 3$, with a separably closed residue field. 
If $\graph_{d-i}(A)$ is connected for some $1 \leq i \leq d-2$, then
$\lambda_{i,i+1}(A) \geq  \#\graph_{d-i -1}(A) - 1$.  
\end{proposition}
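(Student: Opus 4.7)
The plan is to induct on $i$, with the base case $i=1$ given by Proposition \ref{baseCaselambda12: P} (which in fact gives equality). For the inductive step, assume the result holds at level $i-1$ for every ring satisfying the hypotheses, and let $A\cong R/I$ satisfy the hypotheses at level $i\geq 2$, where $(R,\m)$ is complete regular local of dimension $n$. The strategy is to pass from $A$ to $A/\eltL x\eltR$ via a single carefully chosen element $x$, then combine Proposition \ref{lambdaBound: P}, Proposition \ref{graphModuloNZD: P}, and Theorem \ref{CCsNZD: T}.

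By prime avoidance I would choose $r\in\m$ that simultaneously: (i) lies in no minimal prime of any ideal in $\Xi(A)$, which is the hypothesis of Theorem \ref{CCsNZD: T}; and (ii) for $j=i$ and $j=i+1$, lies in no minimal prime of $\Supp_R H^{n-j}_I(R)$ whenever this support is not $\{\m\}$, which is the hypothesis of Proposition \ref{lambdaBound: P}. All families of primes to be avoided are finite, the first because $A$ has finitely many minimal primes and the second by \cite[Corollary 3.6]{Lyubeznik93}. Let $x$ denote the image of $r$ in $A$. Since $\Xi(A)$ contains each minimal prime of $A$, $x$ lies in no minimal prime of $A$, and consequently $A/\eltL x\eltR$ is equidimensional complete local of dimension $d-1$. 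Note that $2\leq i\leq d-2$ already forces $d\geq 4$, so $A/\eltL x\eltR$ has dimension at least $3$.

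Now chain the three machines. Proposition \ref{lambdaBound: P} (applicable since $2\leq i\leq d-2$) gives
\[
\lambda_{i,i+1}(A)\;\geq\;\lambda_{i-1,i}(A/\eltL x\eltR).
\]
Proposition \ref{graphModuloNZD: P}, applied to the connected graph $\graph_{d-i}(A)$ (applicable since $1\leq d-i\leq d-2$), shows that
\[
\graph_{(d-1)-(i-1)}(A/\eltL x\eltR)\;=\;\graph_{d-i}(A/\eltL x\eltR)
\]
is connected. Hence the inductive hypothesis applies to $A/\eltL x\eltR$ at level $i-1$, which yields
\[
\lambda_{i-1,i}(A/\eltL x\eltR)\;\geq\;\#\graph_{(d-1)-(i-1)-1}(A/\eltL x\eltR)-1\;=\;\#\graph_{d-i-1}(A/\eltL x\eltR)-1.
\]
Finally, Theorem \ref{CCsNZD: T} at level $d-i-1$ (applicable since $1\leq d-i-1\leq d-2$) gives $\#\graph_{d-i-1}(A/\eltL x\eltR)=\#\graph_{d-i-1}(A)$. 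Concatenating these three relations produces the desired inequality $\lambda_{i,i+1}(A)\geq\#\graph_{d-i-1}(A)-1$.

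The main obstacle is bookkeeping: arranging that a single $r$ simultaneously satisfies the hypotheses of all three earlier results, and verifying at each step that the indices $i$, $i-1$, $d-i$, and $d-i-1$ fall within the admissible ranges for the machines being invoked. The most delicate point is the compatibility of the inductive hypothesis with the drop in dimension, which is precisely what the constraint $i\leq d-2$ guarantees.
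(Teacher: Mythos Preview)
Your proof is correct and follows essentially the same approach as the paper's: induct on $i$ with base case Proposition~\ref{baseCaselambda12: P}, then for the inductive step choose a single element avoiding the finitely many primes required by Proposition~\ref{lambdaBound: P} and Theorem~\ref{CCsNZD: T}, and chain the resulting inequalities exactly as you do. The only cosmetic difference is that the paper inducts from level $i$ to $i+1$ rather than from $i-1$ to $i$, and folds the connectedness-preservation step into the invocation of Theorem~\ref{CCsNZD: T} (with one component) rather than citing Proposition~\ref{graphModuloNZD: P} separately.
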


\begin{proof}
The case that $i=1$ holds by Lemma \ref{baseCaselambda12: P} (which, in particular, verifies the dimension two case).  
By way of induction on $i \geq 1$, assume that for any ring $B$ of dimension $e$ satisfying our hypotheses for which
$1 \leq i \leq e - 2$, and  $\graph_{e-i}(B)$ is connected, we have that $\lambda_{i,i+1}(B) \geq \#\graph_{e-i-1}(B)-1$.
 
Take a ring $A$ satisfying our hypotheses
for which $\dim(A) = d$, 
$2 \leq i + 1 \leq d - 2$ (so that $d \geq 4$), and
 $\graph_{d - i - 1}(A)$ is connected. 
By Proposition \ref{lambdaBound: P} and Theorem \ref{CCsNZD: T}, there exists $x$ in the maximal ideal of $A$ for which $\lambda_{i+1, i+2}(A) \geq \lambda_{i, i+1}(A/\eltL x \eltR)$, $\graph_{d-i-1}(A/\eltL x \eltR)$ is connected, 
and $ \#\graph_{d-i -2}(A) = \#\graph_{d-i-2}(A/\eltL x \eltR)$.       
Since  $\dim (A/\eltL x \eltR) = d  - 1 \geq 3$ and 
$
1 \leq i \leq d - 3, 
$
 we can apply the inductive hypothesis to $A/\eltL x \eltR$ and conclude that 
 \[
\lambda_{i+1, i+2}(A) \geq  \lambda_{i, i+1}(A/\eltL x \eltR) \geq \#\graph_{d-i -2}(A/\eltL x \eltR) - 1 = \#\graph_{d-i -2}(A) - 1.
\]

\vspace{-.7cm}

\end{proof}

Using induction and the Mayer-Vietoris sequence in local cohomology, we can remove the connectivity hypothesis in Lemmas \ref{baseCaselambda12: P} and \ref{graphConnectedGeneral: P}.  
Note that under the following theorem's hypotheses, $\lambda_{0,1}(A) = \#\graph_{d-1}(A)$ \cite[Proposition 3.1]{Walther01}, \cite[Proof of Theorem 2.9]{HunekeLyubeznik90}.

\begin{theorem}\label{MainTheorem: T}
Let $A$ be an equidimensional complete local ring containing a field of dimension $d \geq 3$, with a separably closed residue field.
Then 
\begin{enumerate}[label=\textup{(\arabic*)}]
\item \label{mainTheorem1: i}  $\lambda_{1,2}(A) = \#\graph_{d-2}(A) - \#\graph_{d-1}(A)$, and   
\item \label{mainTheorem2: i}   $\lambda_{i,i+1}(A) \geq  \#\graph_{d-i -1}(A)-  \#\graph_{d-i}(A)$ for all $1 \leq i \leq d - 2$.
\end{enumerate}

\end{theorem}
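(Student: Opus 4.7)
The plan is to reduce both statements to the already-handled cases where $\graph_{d-i}(A)$ is connected (Lemma \ref{baseCaselambda12: P} for $i=1$ and Proposition \ref{graphConnectedGeneral: P} in general), using a Mayer-Vietoris decomposition indexed by the connected components of $\graph_{d-i}(A)$. Write $A \cong R/I$, with $(R, \m)$ an $n$-dimensional complete regular local ring. Fix $1 \leq i \leq d-2$, set $s = \#\graph_{d-i}(A)$, let $\Sigma_1, \ldots, \Sigma_s$ denote the connected components of $\graph_{d-i}(A)$, and let $\b_k \subseteq R$ be the intersection of those minimal primes of $I$ whose images in $A$ are the vertices of $\Sigma_k$. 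Then $\sqrt{I} = \sqrt{\cap_{k=1}^s \b_k}$, and since $A$ is equidimensional of dimension $d$ each $R/\b_k$ is also equidimensional of dimension $d$, so $\graph_{d-i}(R/\b_k) \cong \Sigma_k$ is connected.

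Set $J_k = \b_1 \cap \cdots \cap \b_k$. For $j \neq k$, no vertex of $\Sigma_j$ is joined to a vertex of $\Sigma_k$ in $\graph_{d-i}(A)$, so the radical identity preceding Corollary \ref{numCC: C} gives $\Ht_R(\b_j + \b_k) \geq n - i + 1$, and hence $\Ht_R(J_{k-1} + \b_k) \geq n - i + 1$ for every $k \geq 2$. Since $R$ is regular, this forces $H^p_{J_{k-1} + \b_k}(R) = 0$ for all $p \leq n - i$, and the Mayer-Vietoris sequence for $J_{k-1}$ and $\b_k$ collapses to an isomorphism
\[
H^{n-i-1}_{J_k}(R) \cong H^{n-i-1}_{J_{k-1}}(R) \oplus H^{n-i-1}_{\b_k}(R).
\]
Iterating and applying $H^i_\m(-)$, counting copies of the injective hull of the residue field yields
\[
\lambda_{i,i+1}(R/I) = \sum_{k=1}^s \lambda_{i,i+1}(R/\b_k).
\]

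Because each $\graph_{d-i}(R/\b_k) \cong \Sigma_k$ is connected, Lemma \ref{baseCaselambda12: P} gives $\lambda_{1,2}(R/\b_k) = \#\graph_{d-2}(R/\b_k) - 1$ in the case $i=1$, while Proposition \ref{graphConnectedGeneral: P} gives $\lambda_{i,i+1}(R/\b_k) \geq \#\graph_{d-i-1}(R/\b_k) - 1$ for general $1 \leq i \leq d-2$. Moreover, since $\graph_{d-i-1}(A)$ is a subgraph of $\graph_{d-i}(A)$ on the same vertex set, its connected components are partitioned by the $\Sigma_k$, so $\#\graph_{d-i-1}(A) = \sum_{k=1}^s \#\graph_{d-i-1}(R/\b_k)$. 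Summing the per-component formulas over $k$ immediately yields the equality in \ref{mainTheorem1: i} and the inequality in \ref{mainTheorem2: i}.

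The main obstacle is the vanishing $H^p_{J_{k-1} + \b_k}(R) = 0$ for $p \leq n - i$, which rests on translating the combinatorial statement ``no edge in $\graph_{d-i}(A)$ between distinct components'' into the height bound $\Ht_R(J_{k-1} + \b_k) \geq n - i + 1$ in $R$; once that bound is in hand, regularity of $R$ takes care of the vanishing, and the Mayer-Vietoris induction on $s$ proceeds routinely, with the connected case supplying the per-component input.
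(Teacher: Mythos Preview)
Your proposal is correct and follows essentially the same route as the paper: both arguments use Mayer--Vietoris together with the height bound $\Ht_R(\b_j+\b_k)\ge n-i+1$ coming from the absence of edges between distinct components of $\graph_{d-i}(A)$, and then feed in the connected case via Lemma~\ref{baseCaselambda12: P} and Proposition~\ref{graphConnectedGeneral: P}. The only cosmetic difference is that the paper phrases the reduction as an induction on $\#\graph_{d-i}(A)$, peeling off one component at a time, whereas you unroll this into an iterated Mayer--Vietoris along $J_k=\b_1\cap\cdots\cap\b_k$ that splits off all components at once.
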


\begin{proof}
Fix $1 \leq i \leq d-2$.  The cases that $\#\graph_{d-i}(A)=1$, for $i=1$ and $i \geq 2$, hold by Lemmas \ref{baseCaselambda12: P} and \ref{graphConnectedGeneral: P}, respectively.  
We proceed by induction on $\#\graph_{d-i}(A) \geq 1$, for all $\#\graph_{d-i-1}(A) \geq \#\graph_{d-i}(A)$. 
Fix an integer $\conncomp_i \geq 2$, and assume that the statement holds for all rings $B$ satisfying our hypotheses, and for which $\# \graph_{d-i}(B) = \conncomp_i-1$.

Take a ring $A$ with $\# \graph_{d-i}(A)= \conncomp_i$, 
and for which $\# \graph_{d-i-1}(A) = \conncomp_{i+1} \geq \conncomp_i$.  
Write $A \cong R/I$, where $(R, \m)$ is an $n$-dimensional regular local ring containing a field, and $I$ is an ideal of $R$.  
Let $\a_i$ be the ideal of $R$ that is the intersection of the minimal primes of $I$ corresponding to the vertices of one connected component $\Sigma_i$ of $\graph_{d-i}(A)$, and let $\b_i$ denote the intersection of the remaining minimal primes of $R$.  
For every minimal prime $\p_i$ in $\Sigma_i$, and $\q_i$ of $A$ that is not a vertex of $\Sigma_i$, there is no edge between these vertices in $\graph_{d-i}(A)$, so $\Ht_A(\p_i + \q_i) \geq d - i + 1$.
Therefore, for any minimal prime $P_i$ of $\a_i$, and $Q_i$ of $\b_i$,  $\Ht_R(P_i + Q_i) \geq n - i + 1$. 
Thus, $\Ht_R(\a_i + \b_i) \geq n - i + 1$, 
and $H^j_{\a_i + \b_i}(R) = 0$ for $j \leq n - i $.  

The Mayer-Vietoris sequence in local cohomology with respect to $\a_i$ and $\b_i$, 
\[
\cdots \to H^{n-i-1}_{\a_i + \b_i}(R) \to H^{n-i-1}_{\a_i}(R) \oplus H^{n-i-1}_{\b_i}(R) \to H^{n-i-1}_I(R)  \to H^{n-i}_{\a_i + \b_i}(R) \to \cdots,
\]
now demonstrates that $H^{n-i-1}_{\a_i}(R) \oplus H^{n-i-1}_{\b_i}(R) \cong H^{n-i-1}_I(R)$.
Consequently, 
\[ H^i_\m (H^{n-i-1}_{\a_i}(R)) \oplus H^i_\m( H^{n-i-1}_{\b_i}(R)) \cong H^i_\m( H^{n-i-1}_I(R) ),\] and by  \cite[Lemma 1.4]{Lyubeznik93}, 
\begin{align} \label{sumLyuInduction: e}
\lambda_{i,i+1}(A) = \lambda_{i,i+1}(R/\a_i) + \lambda_{i,i+1}(R/\b_i).
\end{align} 

We know that $\graph_{d-i}(R/\a_i)$ is connected and $\#\graph_{d-i}(R/\b_i)=\conncomp_i-1$.  Moreover, $\#\graph_{d-i-1}(R/\a_i) + \#\graph_{d-i-1}(R/\b_i) = \conncomp_{i+1}$.   
Additionally noting \eqref{sumLyuInduction: e} and the inductive hypothesis, we conclude that, for $i=1$,
\begin{align*}
\lambda_{1,2}(A) 
= (\#\graph_{d-2}(R/\a_1) - 1 ) +  ( \#\graph_{d-2}(R/\b_1) - (\conncomp_1 -1 )) =  \conncomp_2 - \conncomp_1,
\end{align*}
and, likewise, for all $2 \leq i \leq d-2$,
\[
\lambda_{i,i+1}(A) 
\geq (\#\graph_{d-i-1}(R/\a_i) - 1 ) +  ( \#\graph_{d-i-1}(R/\b_i) - (\conncomp_i -1 )) 
= \conncomp_{i+1} - \conncomp_i.  
\]

\vspace{-.7cm}

\end{proof}

\transition{
Our example shows that the inequality in Theorem \ref{MainTheorem: T} \ref{mainTheorem2: i} can be strict when $i > 1$.  
}

\begin{example}\label{ExNotEq}
The nonzero Lyubeznik numbers of the ring $A$ appearing in Example \ref{graphintro: E} are $\lambda_{0,2}(A) = 1$, $\lambda_{2,3}(A) = 3$, and $\lambda_{4,4}(A) = 3$, so that 
\begin{align*}
\lambda_{1,2}(A) &=  \#\graph_{2}(A) - \#\graph_{3}(A)  = 0, \text{ but} \\
3 = \lambda_{2,3}(A)  &>  \#\graph_{1}(A) - \#\graph_{2}(A)  = 2.
\end{align*}
\end{example}


\section{Connectedness of spectra and Lyubeznik numbers}  \label{spectraLyu: S}


In this section, we combine results from Section \ref{graphsLyu: S}, which relate the Lyubeznik numbers and our graphs, 
and those from Section \ref{spectraGraph: S}, which relate our graphs to connectedness properties of spectra, in order to 
prove that the Lyubeznik numbers determine connectedness properties of spectra.

Calling upon these results, we also recover bounds between connectedness dimension, cohomological dimension, and depth \cite{FalNagoya, FalPRIMS, HochsterHuneke94, Varbaro09}.

As mentioned in the introduction, in our setting it is well known that if $\dim(A) \geq 2$, then the connectedness dimension of $A$ is at least one if and only if the Lyubeznik number $\lambda_{0,1}(A)$ vanishes.  The following analogue characterizes when
connectedness dimension is at least two, again via the vanishing of Lyubeznik numbers.

\begin{theorem} \label{lambda12Final: T}
Let $A$ be an equidimensional complete local ring containing a field of dimension $d \geq 3$, with a separably closed residue field. 
Then 
$\cc(A) \geq 2$ if and only if $\lambda_{0,1}(A) = \lambda_{1,2}(A) = 0$.  
In particular, if $\Spec^\circ(A)$ is connected, then $\cc(A) \geq 2$ if and only if $\lambda_{1,2}(A) = 0$.  
\end{theorem}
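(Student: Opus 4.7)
The plan is to combine Proposition \ref{graphConn: P}, which translates connectedness-dimension bounds into the connectivity of the graphs $\graph_t(A)$, with Theorem \ref{MainTheorem: T}\ref{mainTheorem1: i}, which computes $\lambda_{1,2}(A)$ from the number of connected components of $\graph_{d-2}(A)$ and $\graph_{d-1}(A)$. Together with the identity $\lambda_{0,1}(A) = \#\graph_{d-1}(A) - 1$ (recalled just before Theorem \ref{MainTheorem: T}), these two inputs turn the vanishing of both Lyubeznik numbers into the statement that $\graph_{d-2}(A)$ is a one-vertex\nobreakdash-connected-component graph, which by Proposition \ref{graphConn: P} is equivalent to $\cc(A) \geq 2$.

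More concretely, first I would observe the chain of equivalences
\[
\lambda_{0,1}(A) = 0 \iff \#\graph_{d-1}(A) = 1 \iff \graph_{d-1}(A) \text{ is connected} \iff \cc(A) \geq 1,
\]
using the formula for $\lambda_{0,1}(A)$ and Proposition \ref{graphConn: P} with $t = d-1$. Next, by Theorem \ref{MainTheorem: T}\ref{mainTheorem1: i},
\[
\lambda_{1,2}(A) = \#\graph_{d-2}(A) - \#\graph_{d-1}(A),
\]
so assuming $\lambda_{0,1}(A) = 0$ (i.e., $\#\graph_{d-1}(A) = 1$), the vanishing of $\lambda_{1,2}(A)$ is equivalent to $\#\graph_{d-2}(A) = 1$, which by Proposition \ref{graphConn: P} with $t = d-2$ is equivalent to $\cc(A) \geq 2$.

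For the forward direction of the main equivalence, if $\lambda_{0,1}(A) = \lambda_{1,2}(A) = 0$, then the two paragraphs above give $\graph_{d-1}(A)$ and $\graph_{d-2}(A)$ both connected, hence $\cc(A) \geq 2$. Conversely, if $\cc(A) \geq 2$, then $\graph_{d-1}(A)$ and $\graph_{d-2}(A)$ are both connected (again by Proposition \ref{graphConn: P}, using that $\graph_{d-1}(A)$ is a supergraph of $\graph_{d-2}(A)$ on the same vertices), so $\#\graph_{d-1}(A) = \#\graph_{d-2}(A) = 1$, giving $\lambda_{0,1}(A) = 0$ and $\lambda_{1,2}(A) = 1 - 1 = 0$. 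The ``in particular'' clause is then immediate: $\Spec^\circ(A)$ connected is exactly $\lambda_{0,1}(A) = 0$, so under that hypothesis the equivalence collapses to $\cc(A) \geq 2 \iff \lambda_{1,2}(A) = 0$.

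There is no serious obstacle at this stage: all the work has been front-loaded into Theorem \ref{MainTheorem: T} (whose proof in turn relied on the comparison results of Sections \ref{graphNZD: S} and \ref{LyuNZD: S}, in particular Proposition \ref{lambda12: P} and Theorem \ref{CCsNZD: T}) and into Proposition \ref{graphConn: P}. The only mild care needed is to keep track of the fact that $\graph_{d-2}(A) \subseteq \graph_{d-1}(A)$ on the same vertex set, so that connectedness of the former forces connectedness of the latter—this is what makes the two vanishing conditions collapse to the single condition $\#\graph_{d-2}(A) = 1$ rather than being logically independent.
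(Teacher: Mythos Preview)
Your proposal is correct and follows essentially the same route as the paper: both reduce the statement to connectivity of $\graph_{d-2}(A)$ and $\graph_{d-1}(A)$ via Proposition~\ref{graphConn: P}, and then read off the Lyubeznik numbers from the component counts. The only cosmetic difference is that the paper cites Proposition~\ref{baseCaselambda12: P} (the special case where $\graph_{d-1}(A)$ is connected) directly, whereas you cite the more general Theorem~\ref{MainTheorem: T}\ref{mainTheorem1: i}, which is built on that proposition; either citation yields the same argument.
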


\begin{proof}
By Proposition \ref{graphConn: P}, we know that $\cc(A) \geq 2$ if and only if $\graph_{d-2}(A)$ is connected.  
If $\graph_{d-2}(A)$ is connected, then so is the supergraph $\graph_{d-1}(A)$.
Moreover, $\graph_{d-1}(A)$ is connected precisely if $\Spec^\circ(A)$ is connected; i.e.,  $\lambda_{0,1}(A) = 0$.
The result now follows since in this case, $\graph_{d-2}(A)$ is connected if and only if $\lambda_{1,2}(A) = 0$ by Lemma \ref{baseCaselambda12: P}.
\end{proof}

Theorem \ref{lambda12Final: T} allows us to conclude that for rings of dimension at most three satisfying our hypotheses, 
the connectedness dimension is completely determined by the Lyubeznik numbers.

In general, the vanishing of the Lyubeznik numbers on the superdiagonal of the Lyubeznik table determines a bound on the connectedness dimension.  
We apply the following lemma to obtain this bound in Theorem \ref{vanishingLyuNumConnDim: T}.

\begin{lemma} \label{connDimUp: L}
Let $A$ be an equidimensional complete local ring containing a field of dimension $d \geq 2$, with a separably closed residue field. 
If for some $0 \leq i \leq d - 2$, $\cc(A) \geq i$ and $\lambda_{i,i+1}(A) = 0$, then $\cc(A) \geq i + 1$.  
\end{lemma}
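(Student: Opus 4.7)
The plan is to translate both the hypothesis and the conclusion into connectedness statements about the graphs $\graph_\bullet(A)$, and then read off what we need from the bound in Theorem \ref{MainTheorem: T}\ref{mainTheorem2: i}. By Proposition \ref{graphConn: P}, the assumption $\cc(A) \geq i$ (when $i \geq 1$) amounts to $\graph_{d-i}(A)$ being connected, i.e.\ $\#\graph_{d-i}(A) = 1$, while the desired conclusion $\cc(A) \geq i+1$ amounts to $\#\graph_{d-i-1}(A) = 1$. Note also that $0 \leq i \leq d-2$ guarantees $1 \leq d-i-1$, so the graph $\graph_{d-i-1}(A)$ is defined.

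First, I would dispose of the base case $i = 0$ separately, since Theorem \ref{MainTheorem: T} requires $d \geq 3$ and here we may have $d = 2$. The hypothesis $\cc(A) \geq 0$ is automatic, and the vanishing $\lambda_{0,1}(A) = 0$ is equivalent to $\#\graph_{d-1}(A) = 1$ by the identity $\lambda_{0,1}(A) = \#\graph_{d-1}(A) - 1$ recalled just before Theorem \ref{MainTheorem: T} (equivalently, by the second vanishing theorem of local cohomology). Hence $\graph_{d-1}(A)$ is connected, and Proposition \ref{graphConn: P} yields $\cc(A) \geq 1$.

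Next, for $1 \leq i \leq d-2$ we automatically have $d \geq 3$, so Theorem \ref{MainTheorem: T}\ref{mainTheorem2: i} applies. Combining the bound
\[
\lambda_{i,i+1}(A) \;\geq\; \#\graph_{d-i-1}(A) - \#\graph_{d-i}(A)
\]
with $\lambda_{i,i+1}(A) = 0$ and $\#\graph_{d-i}(A) = 1$ (from $\cc(A) \geq i$ and Proposition \ref{graphConn: P}) gives $\#\graph_{d-i-1}(A) \leq 1$. Since $A$ has at least one minimal prime, every graph $\graph_t(A)$ has at least one vertex, so in fact $\#\graph_{d-i-1}(A) = 1$, i.e., $\graph_{d-i-1}(A)$ is connected. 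Applying Proposition \ref{graphConn: P} once more delivers $\cc(A) \geq i+1$.

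I do not anticipate a genuine obstacle here: once the correspondence between $\cc(A)$ and connectedness of the graphs $\graph_\bullet(A)$ is invoked, the lemma is essentially a direct consequence of Theorem \ref{MainTheorem: T}\ref{mainTheorem2: i}. The only item deserving care is the bookkeeping at the endpoint $i = 0$, where Theorem \ref{MainTheorem: T} is not available but the classical fact $\lambda_{0,1}(A) = \#\graph_{d-1}(A) - 1$ takes its place.
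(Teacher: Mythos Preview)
Your argument is correct and matches the paper's approach: both handle $i=0$ via the identification of $\lambda_{0,1}(A)$ with $\#\graph_{d-1}(A)-1$, and for $1 \leq i \leq d-2$ both translate $\cc(A)\geq i$ into $\#\graph_{d-i}(A)=1$ via Proposition~\ref{graphConn: P} and then use the inequality $\lambda_{i,i+1}(A)\geq \#\graph_{d-i-1}(A)-1$ to force $\graph_{d-i-1}(A)$ connected. The only cosmetic difference is that the paper cites Proposition~\ref{graphConnectedGeneral: P} directly (the connected case), whereas you invoke the more general Theorem~\ref{MainTheorem: T}\ref{mainTheorem2: i} and then specialize to $\#\graph_{d-i}(A)=1$; these are equivalent here.
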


\begin{proof}
First notice that the statement holds when $i=0$ since we always have that $\cc(A) \geq 0$, and if $\lambda_{0,1}(A) = 0$, then $\Spec^\circ(A)$ is connected, so that $\cc(A) \geq 1$.   

Now consider the case that $1 \leq i \leq d -2$, so that $d \geq 3$.
By Proposition \ref{graphConn: P}, $\cc(A) \geq i$ if and only if $\graph_{d-i}(A)$ is connected.  Therefore, by Lemma \ref{graphConnectedGeneral: P}, 
$\#\graph_{d-i-1}(A) \leq \lambda_{i, i+1}(A) + 1 = 1$; i.e., $\graph_{d-i-1}(A)$ is connected, and $\cc(A) \geq i + 1$ by Proposition \ref{graphConn: P}.
\end{proof}

\begin{remark}
If, under the hypotheses of Lemma \ref{connDimUp: L}, $\lambda_{i, i+1}(A) = 0$ for all $0 \leq i \leq d-2$, then the connectedness dimension is at least $d-1$, and equals  $d$ precisely if $A$ has only one minimal prime.

Indeed, write $A \cong R/I$, where $I$ is an ideal of $(R, \m)$, a $n$-dimensional regular local ring containing a field.  
In the spectral sequence $E_{2}^{p,q} = H^p_\m(H^q_I(R)) \underset{p}{\implies} H^{p+q}_\m(R)$
\cite[Proposition 1.4]{Hartshorne67}, 
for each $r \geq 2$, the only possibly nonzero differential to $E_r^{d, n-d}$ is 
$\partial_r^{d-r, n-d+r-1}: E_r^{d-r, n-d+r-1} \to E_r^{d, n-d}$.  However, $E_2^{d-r, n-d+r-1} = H^{d-r}_\m(H^{n-d+r-1}_I(R)) = 0$ since $\lambda_{d-r, d - r + 1}(A) = 0$.  
Let $E$ denote the injective hull of the residue field of $R$.
Then since $E_{2}^{p,q}$ is only nonzero if $p + q = n$ if $p = d$ and $q = n-d$, and
$H^n_\m(R) \cong E$, we must have that $E_{2}^{d,n-d} = H^d_\m(H^{n-d}_I(R)) \cong E$.  Thus, $\lambda_{d,d}(A) = 1$, so that the Hochster-Huneke graph of $A$ is connected, and $\cc(A) \geq d-1$ \cite[Theorem 1.3]{LyuInvariants}, \cite[Main Theorem]{Zhang07}, \cite[Theorem 3.6]{HochsterHuneke94}.
Since $\cc(A)$ and $\dim(A)$ coincide if and only if $A$ has only one minimal prime \cite[Remark 19.2.5]{BrodmannSharpEd2}, the connectedness dimension is determined. 

The following theorem establishes a stronger relationship between the vanishing of Lyubeznik numbers and connectedness dimension. 
\end{remark}

\begin{theorem} \label{vanishingLyuNumConnDim: T}
Let $A$ be an equidimensional complete local ring containing a field of dimension $d \geq 2$, with a separably closed residue field. 
If for some $0 \leq i \leq d - 2$, 
$\lambda_{j, j+1}(A) = 0$ for all $0 \leq j \leq i$, then $\cc(A) \geq i+1$. 
\end{theorem}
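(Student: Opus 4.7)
The plan is to prove Theorem \ref{vanishingLyuNumConnDim: T} by a straightforward induction on $i$, using Lemma \ref{connDimUp: L} as the engine at each step. Lemma \ref{connDimUp: L} already tells us that, for $0 \leq j \leq d-2$, the hypotheses $\cc(A) \geq j$ and $\lambda_{j,j+1}(A)=0$ together imply $\cc(A) \geq j+1$. So the theorem amounts to iterating this implication $i+1$ times.

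More precisely, I would induct on $i$ in the range $0 \leq i \leq d-2$. For the base case $i=0$, note that $\cc(A) \geq 0$ is automatic, and the hypothesis $\lambda_{0,1}(A)=0$ combined with Lemma \ref{connDimUp: L} (taking $i=0$ there) yields $\cc(A) \geq 1$. For the inductive step, suppose the statement holds for $i-1$, so that the assumption $\lambda_{j,j+1}(A)=0$ for all $0 \leq j \leq i-1$ already gives $\cc(A) \geq i$. If in addition $\lambda_{i,i+1}(A)=0$, then applying Lemma \ref{connDimUp: L} one more time (with the value $i$ in that lemma) upgrades the bound to $\cc(A) \geq i+1$, as required.

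There is no real obstacle here; the content of the theorem is packaged in Lemma \ref{connDimUp: L}, whose proof in turn relies on Proposition \ref{graphConn: P} (which identifies $\cc(A) \geq i$ with the connectedness of $\graph_{d-i}(A)$) and on Proposition \ref{graphConnectedGeneral: P} (which bounds $\#\graph_{d-i-1}(A)$ in terms of $\lambda_{i,i+1}(A)$ once $\graph_{d-i}(A)$ is connected). The only mild subtlety is to verify that the indices stay within the legitimate range $0 \leq i \leq d-2$ throughout the induction, which is exactly what the hypothesis $i \leq d-2$ ensures. The proof is therefore essentially a one-line induction.
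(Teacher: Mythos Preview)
Your proposal is correct and follows exactly the paper's approach: a direct induction on $i$ using Lemma \ref{connDimUp: L} at each step, with the base case handled by $\cc(A)\geq 0$ and $\lambda_{0,1}(A)=0$. The paper's proof is essentially the same one-line induction you describe.
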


\begin{proof}
We proceed by induction to show that $\cc(A) \geq j  + 1$ for all $j \leq i$, so that $\cc(A) \geq i + 1$.
Indeed, if $\lambda_{0,1}(A) = 0$, then $\cc(A) \geq 1$, and if $\cc(A) \geq j+1$ for some $j \leq i$, then $\cc(A) \geq j + 2$ by Lemma \ref{connDimUp: L}.  
\end{proof}

\begin{example}
For the ring from Example \ref{graphintro: E}, $\lambda_{0,1}(A) = \lambda_{1,2}(A) = 0$ and $\lambda_{2,3}(A) = 3$.
Thus, in this case, Theorem \ref{vanishingLyuNumConnDim: T} is sharp in the sense that \[ \cc(A) = 2 = \min\{i \geq 0 \mid \lambda_{i, i+1}(A) \neq 0\}. \]
\end{example}

\transition{We recover a strengthening of Faltings' connectedness theorem that relates connectedness and cohomological dimensions.
Given an ideal $I$ of a Noetherian ring $R$, recall that the \emph{cohomological dimension} of $I$ in $R$, denoted $\cd(I, R)$, is the maximum index $i \geq 0$ for which 
the local cohomology module $H^i_I(R)$ is not zero.
}

\begin{corollary}[{cf.\,\cite{FalNagoya}, \cite{FalPRIMS}, \cite[Theorem 3.3]{HochsterHuneke94}, \cite[Corollary 1.7]{Varbaro09}}] \label{CDCD: C}
Let $R$ be an $n$-dimensional complete regular local ring containing a field, with a separably closed residue field.
If $I$ is an ideal of $R$ for which $R/I$ is equidimensional and of dimension $d \geq 2$, then 
\[
\cc(R/I) \geq n - \cd(I, R) - 1.
\]
\end{corollary}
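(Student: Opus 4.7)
The plan is to apply Theorem \ref{vanishingLyuNumConnDim: T} after translating the hypothesis on cohomological dimension into vanishing of Lyubeznik numbers on the superdiagonal. Set $c = \cd(I, R)$, so that $H^{k}_I(R) = 0$ for every $k > c$. By definition, $\lambda_{j, j+1}(R/I) = \dim_{\k} \Ext^{j}_R(\k, H^{n-j-1}_I(R))$, so whenever $n - j - 1 > c$, i.e.\ whenever $j \leq n - c - 2$, the module $H^{n-j-1}_I(R)$ vanishes and thus $\lambda_{j, j+1}(R/I) = 0$.

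Before invoking Theorem \ref{vanishingLyuNumConnDim: T} with $i = n - c - 2$, I would verify that $0 \leq i \leq d - 2$. The upper bound $i \leq d - 2$ amounts to $c \geq n - d$. Since $R/I$ is equidimensional of dimension $d$, one has $\Ht_R(I) = n - d$, and the local cohomology module $H^{n-d}_I(R)$ is nonzero in a regular local ring, so $c \geq n - d$, as needed. If $c \geq n - 1$, then the claimed bound $\cc(R/I) \geq n - c - 1$ reduces to $\cc(R/I) \geq 0$, which holds trivially. Otherwise $c \leq n - 2$, so $i = n - c - 2 \geq 0$ and the theorem applies.

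Applying Theorem \ref{vanishingLyuNumConnDim: T} with this value of $i$ then yields
\[
\cc(R/I) \;\geq\; i + 1 \;=\; n - c - 1 \;=\; n - \cd(I, R) - 1,
\]
which is the desired conclusion. The main (and only) conceptual step is matching the degrees correctly so that the vanishing window $n - j - 1 > c$ lines up with the range $0 \leq j \leq i$ demanded by Theorem \ref{vanishingLyuNumConnDim: T}; everything else is an index check and a trivial-case separation.
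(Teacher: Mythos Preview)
Your proof is correct and follows essentially the same approach as the paper's: both deduce from $H^{n-j-1}_I(R)=0$ for $n-j-1>\cd(I,R)$ that the superdiagonal Lyubeznik numbers $\lambda_{j,j+1}(R/I)$ vanish in the appropriate range, and then invoke Theorem~\ref{vanishingLyuNumConnDim: T}. The only cosmetic difference is the case split---you separate off the trivial case $c\geq n-1$ and directly set $i=n-c-2$, whereas the paper instead distinguishes whether all $\lambda_{i,i+1}$ vanish for $0\leq i\leq d-2$ and otherwise works with the minimal nonvanishing index; both are equivalent bookkeeping.
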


\begin{proof}
First assume that $\lambda_{i, i+1}(R/I) = 0$ for all $0 \leq i \leq d-2$.
Since $\cd(I, R) \geq n - d$, the depth of $R$ on $I$, 
we have that $\cc(A) \geq d - 1 \geq n - \cd(I, R) -1$ by Theorem \ref{vanishingLyuNumConnDim: T}.   

Otherwise, let $\counter = \min\{ 0 \leq i \leq d-2 \mid \lambda_{i, i + 1}(A) \neq 0 \}$. 
For $0 \leq j < n - \cd(I, R)  - 1$, since $n - j - 1 > \cd(I, R)$, we have that $H^{n-j-1}_I(R) = 0$, so that $\lambda_{j, j+1}(R/I) = 0$.
Therefore, $\counter \geq n - \cd(I, R)  - 1$, 
which is at most $\cc(R/I)$ by Theorem \ref{vanishingLyuNumConnDim: T}. 
\end{proof}

Our results also allow us to immediately recover the following inequality between connectedness dimension and depth established by Varbaro.  
This can be thought of as a ``hidden application'' of the Lyubeznik numbers since neither they, nor local cohomology, appears in its statement.

\begin{corollary}[{cf.\,\cite[Proposition 2.11]{Varbaro09}}]  \label{CDdepth: C}  
Let $A$ be an equidimensional complete local ring containing a field, with a separably closed residue field.  
Then 
\[
\cc(A) \geq \depth(A) - 1. 
\]
\end{corollary}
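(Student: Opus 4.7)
The plan is to proceed by induction on $d' = \depth(A)$. If $d' \leq 1$, the inequality $\cc(A) \geq d' - 1$ is immediate from $\cc(A) \geq 0$. For the base case $d' = 2$, the vanishings $H^0_\m(A) = H^1_\m(A) = 0$ together with the standard exact sequence $0 \to H^0_\m(A) \to A \to H^0(\Spec^\circ(A), \mathcal{O}_A) \to H^1_\m(A) \to 0$ give $A \cong H^0(\Spec^\circ(A), \mathcal{O}_A)$. Because $A$ is local and so has no nontrivial idempotents, neither does this ring of sections, so $\Spec^\circ(A)$ is connected and $\cc(A) \geq 1 = d' - 1$.

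For the inductive step $d' \geq 3$ (so that $d = \dim(A) \geq 3$), I choose $x \in \m$ by prime avoidance (possible since the separably closed residue field is infinite) lying in no associated prime of $A$ and in no minimal prime of any ideal in $\Xi(A)$. These are finitely many primes, all properly contained in $\m$: associated primes because $\depth(A) \geq 1$ forces $\m \notin \Ass(A)$, and minimal primes of ideals in $\Xi(A)$ by definition. Such an $x$ is simultaneously a non-zero-divisor on $A$ and satisfies the hypothesis of Proposition \ref{graphModuloNZD2: P}. The quotient $A/\eltL x \eltR$ is then an equidimensional complete local ring of dimension $d - 1$ and depth $d' - 1 \geq 2$, containing a field with separably closed residue field, so the inductive hypothesis gives $\cc(A/\eltL x \eltR) \geq d' - 2$. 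Proposition \ref{graphConn: P} applied to $A/\eltL x \eltR$ then yields that $\graph_{d - d' + 1}(A/\eltL x \eltR)$ is connected; since $1 \leq d - d' + 1 \leq d - 2$ when $d' \geq 3$, Proposition \ref{graphModuloNZD2: P} lifts this to the connectedness of $\graph_{d - d' + 1}(A)$, and a second application of Proposition \ref{graphConn: P} to $A$ gives $\cc(A) \geq d - (d - d' + 1) = d' - 1$, closing the induction.

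The main obstacle is recognizing that the ostensibly more natural route through Theorem \ref{vanishingLyuNumConnDim: T}, namely showing $\lambda_{j, j+1}(A) = 0$ for every $0 \leq j \leq d' - 2$, does not succeed beyond $j = 1$. Proposition \ref{lambda12: P} does supply the equality $\lambda_{0,1}(A) + \lambda_{1,2}(A) = \lambda_{0,1}(A/\eltL r \eltR)$, which propagates vanishing when $\depth(A) \geq 3$, but Proposition \ref{lambdaBound: P} provides only the lower bound $\lambda_{j, j+1}(A) \geq \lambda_{j-1, j}(A/\eltL r \eltR)$ for $j \geq 2$, so vanishing of the Lyubeznik number on the right does not force vanishing on the left. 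The induction above sidesteps this obstruction by comparing the graphs $\graph_\bullet$ directly through Proposition \ref{graphModuloNZD2: P} rather than tracking Lyubeznik numbers.
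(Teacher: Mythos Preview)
Your proof is correct, but it takes a genuinely different route from the paper's.

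The paper's argument is a two-line application of Theorem~\ref{vanishingLyuNumConnDim: T}: it invokes the external result \cite[Corollary~3.3]{Varbaro13}, which states directly that $\lambda_{i,i+1}(A) = 0$ for all $0 \leq i \leq \depth(A) - 2$, and then Theorem~\ref{vanishingLyuNumConnDim: T} immediately yields $\cc(A) \geq \depth(A) - 1$. So your final paragraph is somewhat misleading: the route through Theorem~\ref{vanishingLyuNumConnDim: T} \emph{does} succeed, just not from Propositions~\ref{lambda12: P} and~\ref{lambdaBound: P} alone; the paper imports the needed vanishing from Varbaro's work rather than deriving it internally.

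Your argument, by contrast, bypasses Lyubeznik numbers entirely. The induction on $\depth(A)$, with the base case $d'=2$ handled by the classical sequence relating $H^0_\m, H^1_\m$ to global sections on the punctured spectrum, and the inductive step handled purely through the graph-theoretic Propositions~\ref{graphConn: P} and~\ref{graphModuloNZD2: P}, is clean and self-contained within the paper's framework. This buys you independence from \cite{Varbaro13}; in fact, since neither Proposition~\ref{graphConn: P} nor Proposition~\ref{graphModuloNZD2: P} requires the separably closed hypothesis, and your base case does not either, your argument actually establishes the inequality without that assumption. (Your parenthetical about the infinite residue field is unnecessary: ordinary prime avoidance over finitely many primes properly contained in $\m$ suffices.) The paper's approach, on the other hand, highlights the corollary as a consequence of the Lyubeznik-number machinery, which is the thematic point of the section.
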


\begin{proof}
The statement trivially holds when $\dim(A) \leq 1$ or   $\cc(A) \geq \dim(A) -1$, so assume that
$\dim(A) \geq 2$ and $\cc(A) \leq \dim(A) -2.$  
In this case, the result follows from Theorem \ref{vanishingLyuNumConnDim: T}, since
 $\lambda_{i, i+1}(A)  = 0$ for $0 \leq i \leq \depth(A) - 2$ \cite[Corollary 3.3]{Varbaro13}. 
\end{proof}

\begin{remark}
Let $A$ be an equidimensional complete local ring of dimension $d \geq 2$ containing a field, with a separably closed residue field.  
Assume that $\lambda_{j , j + 1}(A) = 0$ for $0 \leq j \leq i$ for some $i \leq d - 2$.
Applying Varbaro's result that  $\lambda_{i,i+1}(A) = 0$ for $i < \depth(A) - 1$ \cite[Corollary 3.3]{Varbaro13}, 
our proof of Corollary \ref{CDdepth: C} actually shows that 
\[
\cc(A) \geq i + 1 \geq \depth(A) - 1.
\]
Strikingly, this means that the value $i+1$ is a numerical invariant of $A$ that always sits between the connectedness dimension and the depth of $A$, providing more refined information about the relationship between these two numbers. 
\end{remark}

\begin{remark}
In Theorem \ref{vanishingLyuNumConnDim: T} and Corollary \ref{CDdepth: C}, it is sufficient to assume that $A$ is a  formally equidimensional local ring containing a field, with a separably closed residue field; e.g., an equidimensional quotient of a regular local ring.  
Indeed, $\cc(A) \geq \cc(\widehat{A})$, $\lambda_{i,j}(A) = \lambda_{i, j}(\widehat{A})$, and $\depth(A) \leq \depth(\widehat{A})$ \cite[Lemma 19.3.1]{BrodmannSharpEd2},  \cite[Lemma 3.2]{Lyubeznik93}. 
\end{remark}


\begin{proposition} \label{countConnComp: P}
Let $A$ be an equidimensional complete local ring containing a field of dimension $d \geq 2$, with a separably closed residue field.
Then for all $0 \leq t \leq d-2$,
\[
\max\{  \#\(\Spec(A) \setminus X \)  \mid X \subseteq \Spec(A) \textup{ closed, } \dim(X) \leq t \} \leq 1 + \sum \limits_{i =0}^t \lambda_{i,i+1}(A),
\]
with equality when $t = 0$ or $1$.  
\end{proposition}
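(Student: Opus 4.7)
The plan is to translate the maximum on the left-hand side into a graph-theoretic quantity via Corollary \ref{numCC: C}, and then obtain the inequality by telescoping the lower bounds from Theorem \ref{MainTheorem: T}. Specifically, setting $s = d - t - 1$, Corollary \ref{numCC: C} gives
\[
\max\{  \#\(\Spec(A) \setminus X \)  \mid X \subseteq \Spec(A) \text{ closed},\, \dim(X) \leq t \} = \#\graph_{d-t-1}(A),
\]
so it suffices to prove
\[
\#\graph_{d-t-1}(A) \leq 1 + \sum_{i=0}^{t} \lambda_{i,i+1}(A),
\]
with equality when $t = 0$ or $t = 1$.

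First, I would handle the base term: since $\lambda_{0,1}(A) = \#\graph_{d-1}(A) - 1$ (recalled from \cite[Proposition 3.1]{Walther01} and the proof of \cite[Theorem 2.9]{HunekeLyubeznik90}, as referenced just before Theorem \ref{MainTheorem: T}), we have $1 + \lambda_{0,1}(A) = \#\graph_{d-1}(A)$, which establishes the $t = 0$ case on the nose. For $t \geq 1$, Theorem \ref{MainTheorem: T}\ref{mainTheorem2: i} yields
\[
\lambda_{i,i+1}(A) \geq \#\graph_{d-i-1}(A) - \#\graph_{d-i}(A) \qquad \text{for all } 1 \leq i \leq t,
\]
so summing and telescoping gives
\[
\sum_{i=1}^{t} \lambda_{i,i+1}(A) \geq \sum_{i=1}^{t} \bigl(\#\graph_{d-i-1}(A) - \#\graph_{d-i}(A)\bigr) = \#\graph_{d-t-1}(A) - \#\graph_{d-1}(A).
\]
Combining with $1 + \lambda_{0,1}(A) = \#\graph_{d-1}(A)$ delivers the desired inequality.

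For the equality statements, the case $t = 0$ was noted above. For $t = 1$ (so $d \geq 3$), Theorem \ref{MainTheorem: T}\ref{mainTheorem1: i} provides the equality $\lambda_{1,2}(A) = \#\graph_{d-2}(A) - \#\graph_{d-1}(A)$, and adding this to $1 + \lambda_{0,1}(A) = \#\graph_{d-1}(A)$ gives
\[
1 + \lambda_{0,1}(A) + \lambda_{1,2}(A) = \#\graph_{d-2}(A),
\]
which is exactly the right-hand side of the desired maximum by Corollary \ref{numCC: C}.

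No serious obstacle is expected here; the argument is essentially bookkeeping that repackages the inequalities already established in Theorem \ref{MainTheorem: T} together with the dictionary between $\#\graph_\bullet(A)$ and numbers of connected components of punctured spectra in Corollary \ref{numCC: C}. The one subtle point to verify is that the indexing $0 \leq t \leq d - 2$ matches $1 \leq d - t - 1 \leq d - 1$, so that Corollary \ref{numCC: C} and Theorem \ref{MainTheorem: T} apply in the claimed ranges; this is routine. The strictness of the inequality for $t \geq 2$ (as illustrated by Example \ref{ExNotEq}) aligns with the fact that Theorem \ref{MainTheorem: T}\ref{mainTheorem2: i} is only a bound, not an equality, so no further equality can be claimed in general.
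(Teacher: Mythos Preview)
Your proposal is correct and follows essentially the same approach as the paper: both arguments invoke Corollary~\ref{numCC: C} to rewrite the maximum as $\#\graph_{d-t-1}(A)$, then telescope the bounds from Theorem~\ref{MainTheorem: T} together with $1+\lambda_{0,1}(A)=\#\graph_{d-1}(A)$ to obtain the inequality, with equality for $t=0,1$ coming from the exactness in Theorem~\ref{MainTheorem: T}\ref{mainTheorem1: i} and the known value of $\lambda_{0,1}(A)$.
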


\begin{proof}
By Theorem \ref{MainTheorem: T}, for $1 \leq t \leq d - 2$, 
\begin{align*}
1 + \sum_{i=0}^t \lambda_{i,i+1}(A) &\geq 1 + \lambda_{0,1}(A) +  \sum_{i=1}^t \( \#\graph_{d- i -1}(A)-  \#\graph_{d-i}(A) \) \\
&= \#\graph_{d-1} + \(  \#\graph_{d- t -1}(A) -  \#\graph_{d -1}(A) \) = \#\graph_{d- t -1}(A),
\end{align*}
with equality for $t=1$.  Since $ \#\Spec^\circ(A) = \lambda_{0,1}(A) + 1$, we also have equality for $t=0$.  
We are done by Corollary \ref{numCC: C}.  
\end{proof}

\begin{remark}
Proposition \ref{countConnComp: P} is a strengthening of Theorem \ref{vanishingLyuNumConnDim: T}:
If $\lambda_{i,i+1}(A) = 0$ for all $0 \leq i \leq t \leq d - 2$, then the right-hand side of the equation in the statement equals one, so the left-hand side of the equation must also equal one,
and $\cc(A) \geq d - t$.   
\end{remark}

Our running example illustrates that the bound in Proposition \ref{countConnComp: P} can be strict.

\begin{example}
Taking the ring $A$ from Example \ref{graphintro: E}, we have that
$\#\graph_1(A) = 3$ and $1 + \lambda_{0,1}(A) + \lambda_{1,2}(A) + \lambda_{2,3}(A) = 4$, so that the 
inequality is strict in Proposition \ref{countConnComp: P} when $t=2$.  
\end{example}

\begin{remark} \label{dimensionThree: R}
Proposition \ref{countConnComp: P} allows one to describe the Lyubeznik table of $A$ when $d=3$.  
Indeed, write $A \cong R/I$, where $I$ is an ideal of $(R, \m)$, a regular local ring containing a field.
A straightforward argument using the spectral sequence $E_{2}^{p,q} = H^p_\m(H^q_I(R)) \underset{p}{\implies} H^{p+q}_\m(R)$ shows that $\lambda_{0,2}(A) = \lambda_{1,3}(A)$, $\lambda_{3,3}(A) = \lambda_{0,1}(A) + \lambda_{1,2}(A) + 1$, and all other Lyubeznik numbers must vanish. 
It is known that $\lambda_{0,1}(A) = \#\Spec^\circ(A)-1$, and $\lambda_{3,3}(A)$ is the maximum number of connected components of $\Spec(A) \setminus \V(\a)$ among all ideals $\a$ of $A$ that are either its maximal ideal, or have height two  \cite[Proposition 3.1]{Walther01},  \cite[Theorem 3.6]{HochsterHuneke94}, \cite[Theorem 1.3]{LyuInvariants}, \cite[Main Theorem]{Zhang07}.
Therefore, $\lambda_{1,2}(A)$ must equal the maximum number of connected components of $\Spec(A) \setminus \V(\a)$ among all ideals $\a$ of $A$ of height two.
\end{remark}

\begin{example}
It is not difficult to find rings $A$ of dimension three with $\lambda_{3,3}(A) = 3$, and for which $\lambda_{0,1}(A)$ and $\lambda_{1,2}(A)$ take on any of the possible values determined by Proposition \ref{countConnComp: P} and Remark \ref{dimensionThree: R}.  
For instance, given a field $\k$, let   
\begin{align*}
 S = \k\llbracket x_1, \ldots, x_9 \rrbracket / \eltL x_1, \ldots, x_6 \eltR \cap \eltL x_4, \ldots, x_9 \eltR,
\end{align*}
and fix the following ideals of $S$: 
\begin{alignat*}{3}
\p = \eltL x_1, x_2, x_3, x_7, x_8, x_9 \eltR & \hspace{.3cm} & \q =  \eltL x_2, x_3, x_4, x_7, x_8, x_9 \eltR   & \hspace{.3cm} &
\r =   \eltL x_1, x_2, x_4, x_5, x_7, x_8 \eltR
\end{alignat*}
Using Proposition \ref{countConnComp: P}, we can compute the following values:  
\begin{alignat*}{4}
&\cc(S/\p) = 1 & \hspace{.5cm} & \cc(S/\q) = 1 & \hspace{.5cm} &\cc(S/\r) = 2& \\
&\lambda_{0,1}(S/\p) = 2 & \hspace{.5cm} & \lambda_{0,1}(S/\q) = 1 & \hspace{.5cm} &\lambda_{0,1}(S/\r) = 0& \\
 &\lambda_{1,2}(S/\p) = 0 & \hspace{.5cm} & \lambda_{1,2}(S/\q) = 1 & \hspace{.5cm} &\lambda_{1,2}(S/\r) = 2&
\end{alignat*}  
\end{example}

\section{Lyubeznik numbers of projective varieties}

The general setting for this section is the following. 
\begin{setting} \label{proj: Setting}
Let $X$ be an equidimensional projective variety of dimension $d$ over a field $\k$. 
By choosing an embedding $X \hookrightarrow \PP^n_\k$, one can write $X=\Proj(R/I)$, where $I$ is a homogeneous ideal of the polynomial ring 
 $R=\k[x_0,\ldots,x_n]$. 
 Let $\m = \eltL x_0, \ldots, x_n \eltR$ be the homogeneous maximal ideal of $R$, and $A = ( R/I )_\m$ the local ring at the vertex of the affine cone over $X$.  
 \end{setting}
 
In our setting, Lyubeznik posed the question of whether the Lyubeznik numbers $\lambda_{i,j}(A)$ are independent of $n$, and the choice of embedding of $X$ into $ \PP^n_\k$.
\cite[Open question, Section 7]{LyuSurveyLC}. 
The question has been answered affirmatively for all Lyubeznik numbers by Zhang when $\k$ has prime characteristic \cite[Theorem 1.1]{WZ-projective}, 
and by Switala when $X$ is smooth \cite[Main Theorem 1.2]{SwitalaNonsingular}.

The highest Lyubeznik number $\lambda_{d+1, d+1}(A)$ is independent of the choice of embedding  \cite[Theorem 2.7]{Zhang07}, and it is 
well known that  $\lambda_{0,1}(A)$ is as well (cf.\,\cite[Proposition 3.1]{Walther01}).  
Using Theorem \ref{MainTheorem: T}, we can prove the same for $\lambda_{1,2}(A)$, and explicitly characterize its value.
To do so, we use a graph defined analogously to that in Definition \ref{graph: D}.

\begin{definition}[cf.\,\,{Definition \ref{graph: D}}, {\cite{WZ-projective}}]
For $X$ an equidimensional projective variety of dimension $d$, given an integer $1 \leq t \leq d$, define the graph $\graph_t(X)$ as follows:
\begin{enumerate}[label=\textup{(\arabic*)}]
\item The vertices of $\graph_t(X)$ are indexed by the irreducible components of $X$,  and 
\item There is an edge between distinct vertices $Z$ and $W$ if and only if 
\[\dim( Z \cap W) \geq d - t.\]
\end{enumerate}
\end{definition}

\begin{lemma} \label{graphsProj: L}
Under the assumptions in Setting \ref{proj: Setting},  for  all integers $1 \leq t \leq d$, 
\[\#\graph_t(X) = \#\graph_t(\widehat{A}).\]
\end{lemma}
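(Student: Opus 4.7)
The plan is to prove the stronger statement that $\graph_t(X)$ and $\graph_t(\widehat{A})$ are isomorphic as graphs; the equality of their connected-component counts then follows immediately. The argument proceeds in three steps.

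First, I would set up a vertex bijection. The irreducible components $Z_1, \ldots, Z_s$ of $X$ correspond bijectively to the minimal primes $\p_1, \ldots, \p_s$ of $R/I$, each homogeneous in $R$ and contained in $\m$; these are also the minimal primes of $A = (R/I)_\m$. The crux is to show that each extension $\p_i \widehat{A}$ is prime and that these exhaust the minimal primes of $\widehat{A}$. For the graded domain $B_i = R/\p_i$, the associated graded ring of $(B_i)_\m$ with respect to its maximal ideal is canonically isomorphic to $B_i$ itself, because $\p_i$ is homogeneous, so $\m^n (B_i)_\m / \m^{n+1}(B_i)_\m \cong (B_i)_n$. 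As $B_i$ is a domain, a standard leading-form argument shows that $\widehat{(B_i)_\m}$ is also a domain, and hence $\p_i \widehat{A}$ is prime. Faithful flatness of $A \to \widehat{A}$ then ensures these primes remain minimal and account for every minimal prime of $\widehat{A}$.

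Next, I would match the edge conditions under this bijection. Since $A$ is catenary and equidimensional of dimension $d+1$ with $\Ht_R(I) = n-d$, and $\p_i + \p_j$ is homogeneous (so $\dim(R/(\p_i+\p_j))_\m = \dim R/(\p_i+\p_j)$), a short height computation yields
\[ \Ht_A(\p_i A + \p_j A) \;=\; \Ht_R(\p_i + \p_j) - (n-d). \]
On the projective side, $\dim(Z_i \cap Z_j) = \dim R/(\p_i + \p_j) - 1 = n - \Ht_R(\p_i + \p_j)$. Combining, the edge condition $\dim(Z_i \cap Z_j) \geq d - t$ defining $\graph_t(X)$ is equivalent to $\Ht_A(\p_i A + \p_j A) \leq t$, which by faithful flatness of $A \to \widehat{A}$ is in turn equivalent to $\Ht_{\widehat{A}}(\p_i \widehat{A} + \p_j \widehat{A}) \leq t$; this is precisely the edge condition in $\graph_t(\widehat{A})$. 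Finally, combining these two steps gives $\graph_t(X) \cong \graph_t(\widehat{A})$, and $\#\graph_t(X) = \#\graph_t(\widehat{A})$.

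The main obstacle is the first step: verifying that the homogeneous primes of $R$ remain prime under $\m$-adic completion. The clean way to see this is via the associated graded ring, which (thanks to homogeneity of $\p_i$) returns $B_i$ itself, so that the leading-form ``no cancellation'' argument upgrades the domain property of $B_i$ to that of $\widehat{(B_i)_\m}$. Once this graded-ring input is in place, the remaining height calculations are a routine bookkeeping check.
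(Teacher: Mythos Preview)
Your proposal is correct and follows essentially the same approach as the paper: set up a bijection between irreducible components of $X$ and minimal primes of $\widehat{A}$, then verify the edge conditions match via a height computation. The one notable difference is that you supply an argument (via the associated graded ring and a leading-form computation) for why each $\p_i\widehat{A}$ is prime in $\widehat{A}$, whereas the paper simply asserts this; your added justification is correct and fills a genuine gap in the paper's exposition.
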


\begin{proof}
Let $\overline{R} = R/I$, and let $Z_1,\ldots,Z_s$ denote the irreducible components of $X$.
Then there exist homogeneous prime ideals $\p_1,\ldots,\p_s$ of $\overline{R}$ such that
$Z_i\cong \Proj(\overline{R}/\p_i)$ via the correspondence $X=\Proj(\overline{R})$.
Furthermore, $\p_1,\ldots,\p_s$ are the minimal primes of $\overline{R}$.
Since each $\p_i$ is a homogeneous minimal prime ideal of $\overline{R}$, each $\p_i \widehat{A}$ is a minimal  prime ideal of $\widehat{A}$. 
This correspondence gives us a bijection between the vertices of 
$\graph_t(X)$ and the vertices of $\graph_t(\widehat{A})$.  
Furthermore, for all $1 \leq i, j \leq s$, 
\begin{align*}
\Ht_{\widehat{A}}(\p_i\widehat{A}+\p_j\widehat{A})&=\Ht_{\widehat{A}}((\p_i+\p_j)\widehat{A})\\
&=\Ht_{\overline{R}}(\p_i+\p_j)\\
&=\dim(\overline{R})-\dim\(\overline{R}/(\p_i+\p_j) \) \\
&=(d+1)-(\dim(Z_i\cap Z_j)+1)\\
&=d - \dim(Z_i\cap Z_j),
\end{align*}
so that 
$\Ht_{\widehat{A}}(\p_i\widehat{A}+\p_j\widehat{A})\leq t$
if and only if
$\dim(Z_i\cap Z_j)\geq d-t.
$; i.e., 
there is an edge between $\p_i \widehat{A}$ and $\p_j \widehat{A}$ in $\graph_t(\widehat{A})$
if and only if there is an edge between $Z_i$ and $Z_j$ in $\graph_t(X)$.
\end{proof}

%

When $A$ is a complete local ring of dimension at most two that contains a field, and has a separably closed residue field, $\lambda_{1,2}(A) = 0$ \cite[Proposition 3.1]{Walther01}.
Therefore, to establish independence of embedding, we must only consider projective varieties of dimension at least two.

\begin{theorem} \label{lambda12Proj: T}
Under the assumptions in Setting \ref{proj: Setting}, if $d \geq 2$, then
\[
\lambda_{1,2}(A)=\#\graph_{d-1}(X\otimes_\k\overline{\k})-\#\graph_d(X\otimes_\k\overline{\k}).
\]
Consequently, $\lambda_{1,2}(A)$ does not depend on the choice of embedding, regardless of the dimension of $A$. 
\end{theorem}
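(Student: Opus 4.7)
The plan is to combine Theorem \ref{MainTheorem: T} \ref{mainTheorem1: i}, which expresses $\lambda_{1,2}$ of an equidimensional complete local ring of dimension at least three (with separably closed residue field) as a difference of connected-component counts of the graphs $\graph_\bullet$, with Lemma \ref{graphsProj: L}, which identifies those counts for the completion with the corresponding graph counts on the projective variety. The main subtlety is that Theorem \ref{MainTheorem: T} requires a separably closed residue field, whereas $A$ has residue field $\k$; this is precisely why the formula in the statement involves $X \otimes_\k \overline{\k}$ rather than $X$.

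First I would reduce to the case where the base field is separably closed by extending scalars. Set $R' = \overline{\k}[x_0, \ldots, x_n]$, $I' = I R'$, $\m' = \m R'$, and let $A' = (R'/I')_{\m'}$, the local ring at the vertex of the affine cone over $X' = X \otimes_\k \overline{\k}$. Then $A'$ is equidimensional and has separably closed residue field $\overline{\k}$. I would invoke the invariance of Lyubeznik numbers under this type of faithfully flat extension (with geometrically regular fibers) to conclude
\[
\lambda_{1,2}(A) = \lambda_{1,2}(A') = \lambda_{1,2}(\widehat{A'}).
\]

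Since $d \geq 2$, we have $\dim \widehat{A'} = d + 1 \geq 3$, so Theorem \ref{MainTheorem: T} \ref{mainTheorem1: i} applies to $\widehat{A'}$ and yields
\[
\lambda_{1,2}(\widehat{A'}) = \#\graph_{d-1}(\widehat{A'}) - \#\graph_d(\widehat{A'}).
\]
Applying Lemma \ref{graphsProj: L} to the pair $(X', A')$, which fits Setting \ref{proj: Setting} with base field $\overline{\k}$, gives $\#\graph_t(\widehat{A'}) = \#\graph_t(X')$ for $1 \leq t \leq d$. Substituting produces the stated formula. Independence of embedding is then immediate: the quantity $\#\graph_{d-1}(X \otimes_\k \overline{\k}) - \#\graph_d(X \otimes_\k \overline{\k})$ depends only on the irreducible components of $X \otimes_\k \overline{\k}$ and the dimensions of their pairwise intersections, both of which are intrinsic to $X$. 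For $d \leq 1$, the claim holds trivially, as $\lambda_{1,2}(A) = 0$ by the vanishing recalled earlier in the excerpt.

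The technical crux is the base-change identity $\lambda_{1,2}(A) = \lambda_{1,2}(A')$. The cleanest route is to work with $\widehat{A}$ and the analogous complete local ring obtained from $\overline{\k}\llbracket x_0, \ldots, x_n \rrbracket$, and to invoke preservation of Bass numbers of local cohomology modules under the resulting faithfully flat extension (so that the associated primes of $H^{n-2}_I(R)$ transfer correctly and the $0$th Bass number at $\m$ is unchanged). Once this step is in place, the remainder of the argument is an assembly of Theorem \ref{MainTheorem: T}, Lemma \ref{graphsProj: L}, and the observation that the right-hand side is manifestly intrinsic.
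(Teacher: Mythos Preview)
Your proposal is correct and follows essentially the same approach as the paper: reduce to an algebraically (or separably) closed base field via the invariance of Lyubeznik numbers under the relevant faithfully flat extension, then combine Lemma~\ref{graphsProj: L} with Theorem~\ref{MainTheorem: T}~\ref{mainTheorem1: i}. The paper's write-up is terser (it passes directly to $\widehat{A}\otimes_\k\overline{\k}$ and cites the identity $\lambda_{i,j}(A)=\lambda_{i,j}(\widehat{A})=\lambda_{i,j}(\widehat{A}\otimes_\k\overline{\k})$ without further comment), but the logical skeleton is the same as yours.
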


\transition{
Before proceeding to prove Theorem \ref{lambda12Proj: T}, we point out that in light of Example \ref{ExNotEq}, one cannot expect to show the independence of $\lambda_{i,i+1}(X)$ for $i > 1$
via the graphs $\graph_t(X\otimes_\k\overline{\k})$. 
However, we establish a lower bound for these numbers that is independent of the embedding
of $X$ into a projective space.  
}

\begin{theorem} \label{LyuProjBound: T} 
Under the assumptions in Setting \ref{proj: Setting}, for all $2 \leq i \leq d-1$, 
\[
\lambda_{i,i+1}(A) \geq\#\graph_{d-i-1}(X\otimes_\k\overline{\k})-\#\graph_{d-i}(X\otimes_\k\overline{\k}).
\]
\end{theorem}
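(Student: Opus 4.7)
The plan is to mirror the structure of the proof of Theorem~\ref{lambda12Proj: T}, substituting part~\ref{mainTheorem2: i} of Theorem~\ref{MainTheorem: T} for part~\ref{mainTheorem1: i}. The three ingredients are (i) reduction to a separably closed base field, (ii) passage to the completion together with a direct application of Theorem~\ref{MainTheorem: T}\,\ref{mainTheorem2: i}, and (iii) translation from graphs of the completion to graphs of the projective variety via Lemma~\ref{graphsProj: L}.

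First, I would base change to the algebraic closure $\overline{\k}$. Set $\overline{R} = \overline{\k}[x_0, \ldots, x_n]$, $\overline{I} = I \overline{R}$, and let $B = (\overline{R}/\overline{I})_{\overline{\m}}$ be the local ring at the vertex of the affine cone over $X \otimes_\k \overline{\k}$. A standard faithful-flatness argument---the same one presumably used in the proof of Theorem~\ref{lambda12Proj: T}---yields $\lambda_{i,i+1}(A) = \lambda_{i,i+1}(B)$. Moreover, $B$ has separably closed residue field $\overline{\k}$ and is equidimensional of the same dimension as $A$.

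Next, I would pass to the completion $\widehat{B}$. Since $d \geq 3$ (forced by the range $2 \leq i \leq d-1$), $\widehat{B}$ is an equidimensional complete local ring containing a field, with separably closed residue field and $\dim \widehat{B} \geq 4$. Lyubeznik numbers are preserved under completion \cite[Lemma 3.2]{Lyubeznik93}, so $\lambda_{i,i+1}(A) = \lambda_{i,i+1}(\widehat{B})$. Theorem~\ref{MainTheorem: T}\,\ref{mainTheorem2: i} then bounds $\lambda_{i,i+1}(\widehat{B})$ below by a difference of the form $\#\graph_t(\widehat{B}) - \#\graph_{t+1}(\widehat{B})$ for the appropriate $t$ in the range $1 \leq t \leq \dim \widehat{B} - 2$, and Lemma~\ref{graphsProj: L} rewrites these counts as $\#\graph_t(X \otimes_\k \overline{\k}) - \#\graph_{t+1}(X \otimes_\k \overline{\k})$, producing the claimed inequality.

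The main obstacle is verifying $\lambda_{i,i+1}(A) = \lambda_{i,i+1}(B)$ under the scalar extension $\k \to \overline{\k}$. Since Lyubeznik numbers are not unconditionally invariant under base change, this step requires writing $\overline{\k}$ as a filtered colimit of finite separable (and then purely inseparable) extensions of $\k$, using that local cohomology and the relevant $\Ext$ modules commute with such flat base change up to suitable vector space dimensions, and invoking faithful flatness to compare Bass numbers. This same reduction already underlies the proof of Theorem~\ref{lambda12Proj: T}; once it is in hand, the rest of the argument is a formal concatenation of Theorem~\ref{MainTheorem: T}\,\ref{mainTheorem2: i} and Lemma~\ref{graphsProj: L}.
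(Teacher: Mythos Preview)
Your proposal is correct and follows essentially the same route as the paper. The paper proves Theorems~\ref{lambda12Proj: T} and~\ref{LyuProjBound: T} simultaneously in a single terse argument: reduce to $\k = \overline{\k}$ via $\lambda_{i,j}(A) = \lambda_{i,j}(\widehat{A}) = \lambda_{i,j}(\widehat{A}\otimes_\k\overline{\k})$, then invoke Lemma~\ref{graphsProj: L} together with Theorem~\ref{MainTheorem: T}\,\ref{mainTheorem2: i}. The only cosmetic difference is the order of operations---the paper completes first and then tensors with $\overline{\k}$, whereas you base-change to $\overline{\k}$ first and then complete---but the two procedures land on the same ring and the same identification of Lyubeznik numbers.
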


\begin{proof}[Proof of Theorems \ref{lambda12Proj: T} and \ref{LyuProjBound: T}]
We can assume, without loss of generality, that $\k$ is algebraically closed since $X\otimes_\k\overline{\k}=\Proj(R/I\otimes_\k\overline{\k})$ and 
$\lambda_{i,j}(A)= \lambda_{i,j}(\widehat{A}) = \lambda_{i,j}(\widehat{A}\otimes_\k\overline{\k})$ for all $i, j \in \mathbb{Z}$.  
By Lemma \ref{graphsProj: L}, for each $\#\graph_t(X\otimes_\k\overline{\k}) = \#\graph_t(\widehat{A}\otimes_\k\overline{\k})$ for each  $1 \leq t \leq d$, so that 
the conclusion holds by Theorem \ref{MainTheorem: T}.
\end{proof}

\section*{Acknowledgments}
The authors thank Daniel Hern\'andez, who provided valuable mathematical insight, including through his questions.
We also thank Matteo Varbaro for pointing out some interesting implications of our work.  
Moreover, we are grateful for support of the research herein.   
N\'u\~nez-Betancourt was partially supported by NSF DMS \#1502282, 
Spiroff by Simons Foundation Collaboration Grant 245926, and Witt by NSF DMS \#1501404/1623035.

\bibliographystyle{alpha}
\bibliography{References}


\vspace{.5cm}

{\footnotesize

\textsc{Centro de Investigaci\'on en Matem\'aticas, Jalisco S/N, Col. Valenciana CP: 36023, 
Guanajuato, Gto., M\'exico} \\ \indent \emph{Email address}:  {\tt luisnub@cimat.mx} 

\vspace{.25cm}

\textsc{Department of Mathematics, University of Mississippi, 305 Hume Hall, P.O. Box 1848,  University, MS 38677}  \\ 
\indent \emph{Email address}: {\tt spiroff@olemiss.edu}

\vspace{.25cm}

\textsc{Department of Mathematics, University of Kansas, 405 Snow Hall, 1460 Jayhawk Blvd, Lawrence, KS 66045} \\ 
\indent \emph{Email address}: {\tt witt@ku.edu}
}

\end{document}